\newtheorem{theor}{Theorem }
\newtheorem{prop}{Proposition}[section]
\newtheorem{theo}[prop]{Theorem}
\newtheorem{lem}[prop]{Lemma}
\newcommand{\D}{\mathbb{D}}
\newcommand{\cM}{{\mathcal{M}}}
\newcommand{\Lat}{\mathrm{Lat}}
\newcommand{\Hol}{\mathrm{Hol}}
\newcommand\CC{\mathbb{C}}
\newcommand\TT{\mathbb{T}}
\newcommand\DD{\mathbb{D}}
\newcommand\NN{\mathbb{N}}
\DeclareMathOperator{\supp}{supp}
\newcommand{\cZ}{{\mathcal{Z}}}
\newcommand{\cJ}{{\mathcal{J}}}
\newcommand{\cD}{{\mathcal{D}}}
\newcommand{\cH}{{\mathcal{H}}}
\newcommand{\cP}{{\mathcal{P}}}
\newcommand{\cC}{{\mathcal{C}}}
\DeclareMathOperator{\dist}{dist}
\title[Cyclicity and  invariant subspaces in Dirichlet  spaces]
{Cyclicity and invariant subspaces in Dirichlet  spaces}
\author[O. El-Fallah,  Y. ElMadani, K. Kellay]{O. El-Fallah,  Y. ElMadani, K. Kellay}
\subjclass[2000]{46E22, 31A05, 31A15, 31A20, 47B32}
\keywords{Dirichlet  spaces,  capacity, cyclic vector, invariant subspace}
\address{O.El-Fallah \& Y. ElMadani, Laboratoire Analyse et Applications URAC/03\\ 
Universit\'e Mohamed V Agdal-Rabat- \\  B.P. 1014 Rabat\\Morocco}
\email{elfallah@fsr.ac.ma, elmadanima@gmail.com}
\address{K. Kellay\\Universite de  Bordeaux \\ IMB\\
351 cours de la Lib\'eration\\33405 Talence \\France}
\email{kkellay@math.u-bordeaux1.fr}
\thanks{Research partially supported by "Hassan II Academy of Science and Technology" for the first and the second authors. }
\begin{document}
\maketitle
\begin{abstract} Let $\mu$ be a positive finite measure on the unit circle and $\cD (\mu)$
 the associated Dirichlet  space. The generalized Brown-Shields conjecture asserts that an 
 outer function $f \in \cD (\mu )$ is cyclic if and only if $c_\mu (Z (f))= 0$, where $c_\mu$ is the 
 capacity associated with $\cD (\mu)$ and $Z(f)$ is the zero set of $f$. In this paper we prove that 
 this conjecture is true for measures with countable support. We also give in this case 
 a  complete and explicit characterization of  invariant subspaces.
\end{abstract}
\section{Introduction}

The Dirichlet  space $\cD (\mu)$, associated with $\mu$, consists of holomorphic functions on the unit disc whose derivatives are square integrable when weighted against the Poisson integral of $\mu$.  In this paper we study cyclic vectors and invariant subspaces  of the shift  operator on $\cD(\mu)$.  The corresponding problem for the Hardy space $H^2$ was solved by Beurling in  \cite{B}: the cyclic vectors are precisely the outer functions and the invariant subspaces are generated by inner functions.  
Brown--Shields in \cite{BS} studied cyclicity in the classical Dirichlet space $\cD$. They  proved that the set of zeros  of cyclic functions in the Dirichlet space has zero logarithmic capacity
and this led them to ask whether any outer function with this property is cyclic, see also \cite{EKR2,EKR1, HS, RS3} on the study of cyclic vectors. A series of results was obtained by Richter and Richter--Sundberg in \cite{Ri,Ri1,RS1,RS2,RS3} for Dirichlet spaces $\cD(\mu)$ and especially for the description of their invariant subspaces.  More recently,  Guillot in \cite{G} obtained  a precise characterization  of cyclic vectors  for  Dirichlet spaces associated with finitely atomic measures. We refer the reader to  \cite{EKMR}   on these problems. In this work, we focus our attention in the study of the cyclic vectors   and the invariant subspaces for the shift operator acting on the Dirichlet space $\cD(\mu)$ associated with the  measures with countable support.\\

We now introduce the necessary notation. Let $H^2$  be the classical {\it Hardy space} 
of  the open unit disc $\DD$.
  If  $\mu$  is a positive Borel measure on $\TT$,  the {\it Dirichlet   space} 
  $  \cD(\mu)$  is the set of  all functions $f\in H^2$  such that
 \[
 \cD_\mu(f)=\frac{1}{\pi}\int_\DD|f'(z)|^2P_{\mu}(z)dA(z)<\infty, 
 \]
 where $dA(re^{it})=(1/\pi)rdrdt$ denotes  the normalized area measure on $\DD$ 
 and $P_{\mu}$ is the Poisson integral of $\mu$~:
\[
P_{\mu}(z)=\int_\TT\frac{1-|z|^2}{|1-\bar\zeta z|^2}d\mu(\zeta).
\]
 The space $\mathcal{D}(\mu)$ is endowed with the  norm 
\[
\|f\|_{\mu}^{2}:=\|f\|^{2}_{H^2}+\cD_{\mu}(f).
\]
 The classical Dirichlet  space  $\cD$ is precisely $\cD (m)$ 
 where $m$ denotes the normalized Lebesgue measure.  

Given $f\in \mathcal{D}(\mu)$, we denote by $[f]_{\mathcal{D}(\mu)}$ 
the smallest invariant subspace of $\mathcal{D}(\mu)$ containing $f$; namely,  
\[
[f]_{ \mathcal{D}(\mu)}:=\overline{\{pf\text{ : } p\text{  is a polynomial}\}}.
\]
We say that $f$ is {\it cyclic} for $\mathcal{D}(\mu)$ if 
$[f]_{\mathcal{D}(\mu)}=\mathcal{D}(\mu)$.   Denote by $S$ the shift operator on $\cD(\mu)$, 
that is the multiplication by $z$ on $\cD(\mu)$. 
A closed subspace $\cM$ of  $\mathcal{D}(\mu)$ is called  invariant if
 $S\cM\subset \cM$.  The lattice of all closed invariant subspaces of the 
 shift operator will be denoted by 
$\Lat(S,\cD(\mu))$. \\

In this paper we are interested in a characterization of the cyclic functions of 
$\cD (\mu)$ and in a  description of  $\Lat(S,\cD(\mu))$. 
If $d\mu(e^{it}) = 0$ then $\cD(0)=H^2$, and,  in this case, 
Beurling's theorem asserts that all closed invariant subspaces 
are given by $\Theta H^2$, where $\Theta$ is an inner function. 
As a consequence, a function $f \in H^2$ is cyclic for $H^2$ if and only if $f$ is outer. 
In order to extend Beurling's theorem to the classical Dirichlet space,  
Richter in   \cite{Ri1} was led to introduce Dirichlet type spaces. 
First, he proved that every cyclic, analytic  $2$-isometry is unitarily 
equivalent to the shift operator on some $\cD(\mu ) $. 
Recall that a bounded operator on a Hilbert space is called $2$-isometry if 
\[
{T^*}^{2}T^2-2T^*T-I=0
\]
 and it is called {\it analytic} if $\bigcap_{n\geq0}T^n\cH=\{0\}$.  This result allowed him to prove that every invariant subspace for $\cD $ 
 is of the form  $\phi \cD (|\phi|^2dm)$, where $\phi$ is an extremal function for $\cD$, that is
 \[
 \| \phi \|_\mu =1 \quad \text{ and } \quad \langle \phi, z^n\phi\rangle_{\D(\mu)}=0,  \quad n\geq 1.
 \]
 This characterization does not allow to describe the  cyclic functions for $\cD$.
 Brown and Shields showed
in \cite{BS} that if $f$ is cyclic for $\cD$ then $f$ is outer  and the zero set of its radial limit 
\[
\cZ_\TT(f)=\{\zeta\in \TT\text{ : } \lim_{r\to1-}f(r\zeta)=0\}
\]
 is of logarithmic capacity zero. Brown and Shields further conjectured 
 that the converse is also true. This  problem remains open,  
 some partial results of this conjecture can be found in  \cite{EKMR,HS, RS3}.\\

Richter and Sundberg in \cite{RS3} extended the characterization 
of invariant subspaces to all Dirichlet  spaces. Indeed, they proved that 
\[
\Lat(S,\cD(\mu)) = \big\{ \phi \cD( |\phi|^2d\mu ):  \phi  \text{ is an extremal function for }  \cD (\mu) \big\}.
\]
As before, the description of cyclic functions remains an open problem. 
To state a general Brown-Shields conjecture we will introduce a notion 
of capacity associated with these spaces.\\

 The harmonic Dirichlet  space, $\cD^h(\mu)$, associated with $\mu$ is given by
\[
\cD^h(\mu):=\left\{f\in L^2(\TT) \text{ : } \|f\|_2^2+\cD_\mu(f)<\infty\right\},
\]
where $\cD_\mu(f)=\int_\DD|\nabla P[f]|^2P_{\mu}dA$.  Note that $\cD^h(\mu)$ 
is a Dirichlet space in the sense of Beurling--Deny,  and  following \cite{BD}, 
the $c_{\mu}$-capacity of an  open subset $U\subset \TT$ is defined by
\[
c_\mu (U):=\inf\left\{\|u\|_\mu^2 \text{ : } u\in \cD^h (\mu),  \; \; u\geq 0 \text{ and } u \geq 1 \text{ a.e. on } U \right\}.
\]
 As usual we define the $c_{\mu}$-capacity of any subset $F \subset  \TT$  by 
\[
 c_{\mu}(F)= \inf \{ c_{\mu}(U):  U \; \text{ open,  }\;  F\subset U \}.
 \]
The capacity $c_\mu$ is the Choquet capacity \cite{Ca,G}  and so for every borelian subset  $E$ of $\TT$ we have
\[
c_\mu(E)=\sup\{c_\mu(K)\text{ : }K\text{ compact }, K\subset E\}.
\]
In the case $\mu = m$, it is well known that  $c_m$ is comparable to the 
logarithmic capacity. For more details see \cite[Theorem 2.5.5]{AH}.\\

We say that a  property  holds $c_\mu $-quasi-everywhere ($c_\mu $-q.e.) 
 if it holds everywhere outside a set of $c_\mu$ -capacity $0$.  
 So,  $c_\mu $-q.e. implies a.e. Note that for every function $f \in \cD (\mu )$, 
 the radial limits of $f$ exist q.e., see \cite{G}. 
 Recall also that $c_\mu$ satisfies a  weak-type inequality, namely: 
\[
c_\mu(\{\zeta\in\TT\text{ : } |f(\zeta)|\geq t  \;\;\;\;\text{$c_\mu$-q.e.}\})
\leq \frac{\|f\|^{2}_{\mu}}{t^2},\qquad f\in \cD^{h}(\mu).
\]
  
As consequence, the invariant subspace $\cM_\mu(E)$  defined by
\[
\cM_\mu(E):=\{g\in  \cD(\mu): g|E= 0  \text{  } c_\mu\text{-q.e}\}.
\]
is closed in  $\cD(\mu)$. Using these facts it is easy to verify that if 
$f$ is cyclic in $\cD (\mu)$ then $f$ is outer and $c_\mu (\cZ_\TT(f))=0$, where  
the generalized Brown-Shields conjecture claims that the converse is also true.\\

{ For $f\in \cD(\mu)$, and  $\cM\in \Lat(S, \cD(\mu))$, denote by $\cZ(f)$ the zeros set of $f$ on the disc and  set 
\[
\underline{\cZ}(f):= \{\zeta\in \overline{\DD} \text{ : } \liminf_{z\to \zeta}|f(z)|=0\}.
\]
and  
\[
\underline{\cZ}(\cM):=\bigcap_{f\in \cM}\underline{\cZ}(f).
\]
}{
Note that both sets are closed. If $f$ is an inner function, then $\underline{\cZ}(f)$ is the spectrum of $f$, and then $f$ admits analytic continuation through $\TT\setminus  \underline{\cZ}(f)$.}

 In this paper we will prove the following two results:
\begin{theor}\label{Th1}
Let $\mu$ be a positive finite measure on $\TT$.  Let $f\in  \cD(\mu)$  
be such that $\supp \mu \cap\underline{\cZ}(f)$
 is countable. The following assertions are equivalent. 
\begin{enumerate}
\item  $f$ is cyclic for    $\cD(\mu)$.
 \item $f$  is an outer function   and  $c_\mu ({\cZ_\TT}(f))=0.$
 \end{enumerate}
\end{theor}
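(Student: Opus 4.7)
The direction $(1)\Rightarrow(2)$ is standard and is already sketched in the discussion preceding the statement: polynomial multiples $p_nf\to\un$ in $\cD(\mu)$ force $f$ to be $H^2$--outer, while the weak--type capacity inequality applied to the $c_\mu$--q.e.\ vanishing of $p_nf$ on $\cZ_\TT(f)$ forces $c_\mu(\cZ_\TT(f))=0$. I focus on the substantive direction $(2)\Rightarrow(1)$.

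Write $E := \supp\mu\cap\underline{\cZ}(f)=\{\zeta_k\}_{k\geq 1}$, countable by hypothesis. The goal is to show $\un\in[f]_{\cD(\mu)}$; since polynomials are dense in $\cD(\mu)$ and $[f]_{\cD(\mu)}$ is $S$--invariant, this forces $[f]_{\cD(\mu)}=\cD(\mu)$. A useful reformulation invokes the Richter--Sundberg structure $[f]_{\cD(\mu)}=\phi\,\cD(|\phi|^2 d\mu)$ with $\phi$ extremal: one verifies that $\phi$ inherits the hypotheses of the theorem ($\phi$ is outer because $f=\phi g$ with $f$ outer; $c_\mu(\cZ_\TT(\phi))=0$ because $\phi\in[f]\subset\cM_\mu(\cZ_\TT(f))$; and $\supp\mu\cap\underline{\cZ}(\phi)\subset E$ is countable because $\underline{\cZ}(\phi)\subset\underline{\cZ}(f)$), so cyclicity reduces to showing that an extremal function satisfying (2) must be a unimodular constant.

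The core argument is a localization exhausting $E$ by the finite sets $E_N:=\{\zeta_1,\dots,\zeta_N\}$. Since $c_\mu(E_N)\leq c_\mu(\cZ_\TT(f))=0$, for each $N$ and each $\epsilon>0$ one can construct a harmonic cut--off $u_{N,\epsilon}\in\cD^h(\mu)$ equal to $1$ in a neighborhood of $E_N$, with support shrinking to $E_N$ and $\|u_{N,\epsilon}\|_\mu<\epsilon$. Off $E$, the interaction of $f$ with $\mu$ is controlled: any $\zeta\in\underline{\cZ}(f)\setminus\supp\mu$ lies in a region where $f$ admits analytic continuation across $\TT$ (the observation recorded before the theorem) and contributes nothing to the $P_\mu$--Dirichlet integral, while the tail $\{\zeta_k\}_{k>N}$ contributes an error of arbitrarily small capacity. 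Following the strategy of Guillot \cite{G} for finitely atomic measures, one then produces holomorphic multipliers $h_N$ (via a Hilbert--space projection inside $[f]$ combined with a local interpolation problem at $E_N$) such that $h_N f$ approximates $\un$ in $\cD(\mu)$--norm, with error dominated by $\|u_{N,\epsilon}\|_\mu$ plus the tail capacity.

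The key technical obstacle is the synthesis step: upgrading harmonic cut--offs into holomorphic multipliers of $\cD(\mu)$ with comparable norms, and engineering summability of the errors accumulated over the countably many $\zeta_k$. For fixed $N$ the problem is essentially finitely atomic, since near $\supp\mu$ the function $f$ vanishes only at $E_N$ modulo the tail, so Guillot's construction applies point--by--point at each $\zeta_k$ and the contribution of $\zeta_k$ is controlled by $c_\mu(\{\zeta_k\})=0$. A careful choice of the cut--off scales $\epsilon_k$ together with a diagonal extraction as $N\to\infty$ should then deliver a sequence of polynomial multiples $p_n f\to\un$ in $\cD(\mu)$, establishing cyclicity.
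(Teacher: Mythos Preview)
Your proposal has genuine gaps that are not merely technical.

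First, the inequality $c_\mu(E_N)\leq c_\mu(\cZ_\TT(f))$ is unjustified: you set $E_N\subset E=\supp\mu\cap\underline{\cZ}(f)$, but the hypothesis concerns $\cZ_\TT(f)$, the radial zero set, which is in general strictly smaller than $\underline{\cZ}(f)$. A point $\zeta_k\in E$ may well satisfy $c_\mu(\{\zeta_k\})>0$; the paper's proof treats exactly this case separately, using that $\zeta_k$ is then a bounded point evaluation and hence $\zeta_k\notin\cZ_\TT(f)$, so $f(\zeta_k)\neq0$ even though $\zeta_k\in\underline{\cZ}(f)$. Your cut--off construction, premised on $c_\mu(E_N)=0$, collapses at such points. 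Second, the assertion that $f$ admits analytic continuation across $\TT$ near $\underline{\cZ}(f)\setminus\supp\mu$ is false: the remark preceding the theorem concerns inner functions only, and an outer $f\in\cD(\mu)$ can vanish on $\TT$ away from $\supp\mu$ without any continuation. Third, and most seriously, the synthesis step---upgrading harmonic cut--offs near a countable accumulating set into holomorphic multipliers with summable errors---is precisely the hard part, and ``a careful choice \ldots\ should then deliver'' is not an argument. Guillot's construction works because finitely many atoms give a finite--dimensional interpolation problem; extending it to countably many points with accumulation is not a routine diagonal extraction.

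The paper's proof takes an entirely different route that sidesteps all of this. It works in the Banach algebra $\cD(\mu)\cap H^\infty$, argues by contradiction that the closed set $\underline{\cZ}([f]_{\cD(\mu)})$ is nonempty, and exploits countability only to produce an \emph{isolated} point $\zeta_0$. Around $\zeta_0$ it factors $f$ (up to a polynomial correction) as $hg$ with $\underline{\cZ}(h)$ localized near $\zeta_0$, then studies the division ideal $\cJ=\{\psi:\psi g\in[f]\}$, whose common zero set reduces to $\{\zeta_0\}$. The key lemma (Lemma~\ref{lem68}, via Atzmon's resolvent growth / Phragm\'en--Lindel\"of argument) forces $(z-\zeta_0)\in\cJ$; a short case split on whether $c_\mu(\{\zeta_0\})=0$ or $>0$ then shows $g\in[f]$, contradicting $g(\zeta_0)\neq0$. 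No capacity cut--offs, no approximation of $\un$, and the dichotomy on $c_\mu(\{\zeta_0\})$ handles exactly the case your argument mishandles.
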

This theorem  asserts, in particular, that the generalized Brown--Shields conjecture 
 is true if the support of $\mu$ is countable.
We also obtain, in this case, an explicit characterization 
of  invariant subspaces of $\cD (\mu)$.\\
Let  $\cM\in \Lat(S, \cD(\mu))$, we denote by $\Theta_\cM$ the greatest 
common inner divisor  of the inner parts of the non-zero functions 
of $\cM$ and 
$$\cZ_\TT(\cM)=\bigcap_{f\in \cM} \cZ_\TT(f).$$
We have the following characterization which completes  the 
Richter--Sundberg result \cite{RS3} (see Theorem \ref{RSinvar}) in the case when  $\supp \mu$ is countable. 

\begin{theor}\label{Th2}
Let $\mu$ be a positive finite measure on $\TT$ such that 
$\supp \mu$ is countable. Let  $\cM\in \Lat(S, \cD(\mu))$ 
and let $\Theta_\cM$ be  the greatest common inner divisor of
$\cM$. Then, 
\[
\cM=\Theta_\cM H^2\cap \cM_\mu(E),
\]
 where 
 $E=\{\lambda\in  \supp \mu \text{ :  }c_\mu (\{\lambda\})>0 \text{ and } \lambda \in \cZ_\TT(\cM)\}$. 
\end{theor}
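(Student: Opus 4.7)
The easy inclusion $\cM \subseteq \Theta_\cM H^2 \cap \cM_\mu(E)$ is routine. Every $f \in \cM$ has inner part divisible by $\Theta_\cM$, so $f\in\Theta_\cM H^2$. Moreover, for any $\lambda \in E \subseteq \cZ_\TT(\cM)$, every $f \in \cM$ has radial limit zero at $\lambda$; since $c_\mu(\{\lambda\}) > 0$, this vanishing upgrades from almost everywhere to $c_\mu$-quasi-everywhere, giving $f \in \cM_\mu(E)$.

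For the reverse inclusion, the plan is to combine the Richter--Sundberg structure theorem with Theorem~\ref{Th1}. By Theorem~\ref{RSinvar}, $\cM = \phi\,\cD(|\phi|^2 d\mu)$ for some extremal $\phi$, and a standard two-sided divisibility argument ($\Theta_\cM$ divides $\phi\in\cM$, while the inner part of $\phi$ divides every element of $\phi\,\cD(|\phi|^2d\mu)=\cM$) identifies the inner part of $\phi$ with $\Theta_\cM$. Decompose $\phi = \Theta_\cM\phi_o$ with $\phi_o$ outer. Comparing boundary moduli $|\phi g|=|\phi||g|$ on $\TT$ through $\cM = \phi\,\cD(|\phi|^2 d\mu)$ yields $\cZ_\TT(\phi_o) = \cZ_\TT(\phi) = \cZ_\TT(\cM)$.

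Now fix $f \in \Theta_\cM H^2 \cap \cM_\mu(E)$ and write $f = \Theta_\cM u$ with $u \in H^2$. By the Richter--Sundberg isometry $g \mapsto \phi g$ from $\cD(|\phi|^2 d\mu)$ onto $\cM$, membership $f \in \cM$ is equivalent to exhibiting $g \in \cD(|\phi|^2 d\mu)$ with $\phi g = f$; the only candidate is the formal quotient $g := u/\phi_o$, which is holomorphic on $\D$ because $\phi_o$, being outer, is zero-free inside the disk. The countable-support hypothesis makes this tractable: writing $\mu=\sum_n a_n\delta_{\lambda_n}$, the weighted measure $|\phi|^2 d\mu = \sum_n a_n|\phi_o(\lambda_n)|^2 \delta_{\lambda_n}$ is again a countably atomic finite measure, and its atoms at points of $\cZ_\TT(\cM)$, in particular all of $E$, are annihilated by the weight $|\phi_o|^2$. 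At each surviving atom $\lambda_n \notin \cZ_\TT(\cM)$ one has $\phi_o(\lambda_n) \neq 0$, so $g$ admits a nontangential value there and the local Dirichlet integral $D_{\lambda_n}(u/\phi_o)$ makes sense and can be compared to $D_{\lambda_n}(u)$ and $D_{\lambda_n}(\phi_o)$ through the Richter--Sundberg local theory.

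The main obstacle is the global estimate
\[
\sum_n a_n|\phi_o(\lambda_n)|^2\, D_{\lambda_n}(u/\phi_o) < \infty,
\]
which is exactly the condition $g \in \cD(|\phi|^2 d\mu)$. Here Theorem~\ref{Th1} enters: applied to the outer function $\phi_o$ in the Dirichlet space $\cD(|\phi|^2 d\mu)$ (whose defining measure still has countable support, so the hypothesis on $\supp(|\phi|^2 d\mu)\cap\underline{\cZ}(\phi_o)$ is automatic), it furnishes the cyclicity and approximation machinery which, combined with the $c_\mu$-q.e.\ vanishing of $u$ on $E$ inherited from $f$, upgrades the atom-by-atom control to global summability and produces polynomials $p_n$ with $p_n\phi\to f$ in $\cD(\mu)$. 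Once $g \in \cD(|\phi|^2 d\mu)$ is secured, $f = \phi g \in \cM$ follows, completing the reverse inclusion.
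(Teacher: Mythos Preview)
Your approach diverges from the paper's and contains a genuine gap at the step you yourself flag as ``the main obstacle''. The paper does \emph{not} use the extremal representation $\cM=\phi\,\cD(|\phi|^2d\mu)$; instead it invokes the other Richter--Sundberg form (Theorem~\ref{RSinvar}), writing $\cM=\Theta_\cM H^2\cap[f]_{\cD(\mu)}$ for an outer $f\in\cD(\mu)\cap H^\infty$, and then proves $[f]_{\cD(\mu)}=\cM_\mu(E)$ by Banach-algebra methods in $\cD(\mu)\cap H^\infty$: a division ideal $\cJ=\{\psi:\psi\phi\in[f]_{\cD(\mu)}\}$, the spectral identification $\sigma(\pi(u))=\underline{\cZ}(\cJ)$ (Theorem~\ref{th4}), an isolated-point argument exploiting countability, and the key Lemma~\ref{lem68} forcing $(z-\zeta_0)\in\cJ$. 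This machinery mirrors the proof of Theorem~\ref{Th1} rather than invoking it as a black box.

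Your gap is the following. You want $g=u/\phi_o\in\cD(|\phi|^2d\mu)$, but you never establish even that $g\in H^2$: since $\phi_o$ is outer but in general not bounded below, the quotient $u/\phi_o$ of an $H^2$ function by $\phi_o$ need not lie in $H^2$. Your proposed remedy, applying Theorem~\ref{Th1} to $\phi_o$ in $\cD(|\phi|^2d\mu)$, does not address this. Cyclicity of $\phi_o$ there would say that every element \emph{already known to be in} $\cD(|\phi|^2d\mu)$ is a limit of $p_n\phi_o$; it gives no mechanism for proving that a candidate quotient belongs to the space in the first place. The sentence ``upgrades the atom-by-atom control to global summability and produces polynomials $p_n$ with $p_n\phi\to f$'' is precisely the conclusion $f\in[\phi]_{\cD(\mu)}=\cM$ you are trying to prove, so as written the argument is circular. (There is also a preliminary issue: you have not verified $\phi_o\in\cD(|\phi|^2d\mu)$, nor that $c_{|\phi|^2d\mu}(\cZ_\TT(\phi_o))=0$, both of which Theorem~\ref{Th1} would require.)
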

 
Note that  by \cite[Corollary 5.2]{EEK}
\[
c_\mu(\{\lambda\})>0\iff  \int_{0}^{1} \frac{dr}{(1-r)P_{\mu}(r\lambda )+(1-r)^2}<+\infty.
\]

 The plan of the paper is the following. The next section gives a  background 
 on  Dirichlet  spaces. In Section 3, we give a description 
 of invariant subspaces generated by polynomials. In Sections 4
  we collect some results on closed ideals of $\cD (\mu ) \cap H^{\infty}$.
   Sections 5 and 6 are devoted to the proof of  Theorems 1 and 2.\\


\section{Background on the Dirichlet type spaces} 
In this section we recall some results from the Richter--Sundberg papers 
 \cite{RS1, RS2, RS3} about Dirichlet type spaces which will be 
 used in the proofs of our Theorems,  see also \cite{EKMR}. \\

Every function $f\in  H^2$ has non-tangential limits almost everywhere 
on the unit circle $\TT=\partial \DD$. We denote by $f(\zeta)$ the 
non-tangential limit of $f$ at $\zeta\in \TT$ if it exists. \\

 Let $\mu$ be a positive finite measure on the unit circle, 
 the associated Dirichlet  space $\cD(\mu)$  is the set of all  analytic 
 functions $f\in H^2$, such that
\[
\cD_{\mu}(f):=\int_{\TT}\cD_\xi(f)d\mu(\xi)<\infty,
\]
 where $\cD_\xi(f)$ is the {\it local Dirichlet integral} of $f$ at
  $\xi\in \TT$ given by
\[
\cD_{\xi}(f):=\int_{\TT}\frac{|f(e^{it})-f(\xi)|^2}{|e^{it}-\xi|^{2}}\frac{dt}{2\pi}.
\]
 Here $f(\lambda)$ ($\lambda=e^{it} $or $\zeta$) is the radial 
 limit of $f$ at $\lambda$,  that is $f(\lambda)=\lim_{r\to 1-}f(r\lambda)$.

Recall that a function $I$ is inner if it is a bounded holomorphic function on $\DD$
 such that $|I|=1$ a.e. on $\TT$.  
The function $O$ is called outer if it is of the form 
\[
O(z)=\exp\frac{1}{2\pi} \int_\TT\frac{\zeta+z}{\zeta-z}\log \varphi(\zeta) |d\zeta|,  \qquad z\in \DD.
\]
where $\varphi$ is a positive function such that $\log \varphi\in L^1(\TT)$. 
 Note that  $|f|=\varphi$ a.e. on $\TT$.

\begin{theo}\label{forRS} Let $f\in H^2$ and $f=BS_\nu f_o$ with $B$ a Blaschke product,  $\nu$ a singular measure associated with  $S_\nu$  a singular inner factor of $f$, $f_o$  an outer function and let $\zeta\in \TT$
 such that $f_o(\zeta)$ exists. Then 
\begin{multline}\label{formuleRS}
\cD_\zeta(f)=\sum_{\gamma\in \cZ(B)}P_{\delta_\gamma}(\zeta)|f_o(\zeta)|^2+2\int_\TT\frac{d\nu(\lambda)}{|\lambda-\zeta|^2}
|f_o(\zeta)|^2 \\
+\int_\TT\frac{|f(\lambda)|^2-|f(\zeta)|^2-2|f(\zeta)|^2
\log|f(\lambda)/f(\zeta)|}{|\lambda-\zeta|^2}\frac{|d\lambda|}{2\pi},
\end{multline}
where $\delta_\gamma$ is the dirac measure on $\gamma$. 

\end{theo}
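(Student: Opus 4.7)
The plan is to verify \eqref{formuleRS} by reducing, via the inner--outer factorization $f=BS_\nu f_o$, to three building blocks (Blaschke factors, singular inner functions, and the outer function) and then combining through a product identity for $\cD_\zeta$. I would start from the elementary identity
\[
|f(\lambda)-f(\zeta)|^{2} = \bigl(|f(\lambda)|^{2}-|f(\zeta)|^{2}\bigr) - 2\Re\bigl[\overline{f(\zeta)}\,(f(\lambda)-f(\zeta))\bigr],
\]
valid at $\lambda,\zeta\in\TT$ where the relevant radial limits exist. Dividing by $|\lambda-\zeta|^{2}$ and integrating against $|d\lambda|/(2\pi)$, and using $|f|=|f_o|$ a.e.\ on $\TT$, the amplitude bracket produces the piece $\int_{\TT}(|f_o(\lambda)|^{2}-|f_o(\zeta)|^{2})/|\lambda-\zeta|^{2}\,|d\lambda|/(2\pi)$ appearing inside the outer integral of \eqref{formuleRS}; the remaining logarithmic correction, together with the Blaschke and singular contributions, must be extracted from the phase term.

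For the pure inner building blocks I would compute $\cD_\zeta$ directly. A direct calculation gives $\cD_\zeta(b_\gamma)=P_{\delta_\gamma}(\zeta)=(1-|\gamma|^{2})/|\zeta-\gamma|^{2}$ for a single Blaschke factor $b_\gamma(z)=(\gamma-z)/(1-\bar\gamma z)$, and for a finite Blaschke product $B=\prod_j b_{\gamma_j}$ one obtains $\cD_\zeta(B)=\sum_j P_{\delta_{\gamma_j}}(\zeta)$ by iterated use of a product identity for $\cD_\zeta$ (derived from $(gh)(\lambda)-(gh)(\zeta)=g(\lambda)(h(\lambda)-h(\zeta))+h(\zeta)(g(\lambda)-g(\zeta))$) whose cross terms cancel thanks to $|b_\gamma|=1$ on $\TT$; general Blaschke products then follow by monotone convergence. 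The singular inner case is parallel: the atomic case $\nu=s\delta_\eta$ is computed directly from $S_\nu(z)=\exp(s(\eta+z)/(\eta-z))$, giving $\cD_\zeta(S_\nu)=2s/|\eta-\zeta|^{2}$, and weak-$*$ approximation with monotone convergence produces $\cD_\zeta(S_\nu)=2\int_{\TT} d\nu(\lambda)/|\lambda-\zeta|^{2}$ for general singular $\nu$. Applying the same product formula to $f=I f_o$ with $I=BS_\nu$ then yields the two inner summands of \eqref{formuleRS} multiplied by $|f_o(\zeta)|^{2}$; the remaining contribution ($\cD_\zeta(f_o)$ plus a cross term) must be recombined into the outer integral, the logarithmic correction appearing via the Poisson identity $\log|f_o(\zeta)|=\int_{\TT}\log|f_o|\,dP_\zeta$ which converts the logarithmic phase contribution into the stated form.

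The main obstacle is that the radial limit $I(\zeta)$ need not exist, so every use of $I(\zeta)$ in the above argument must ultimately be replaced by expressions in the a.e.\ well-defined data $|f_o|$, $\cZ(B)$, and $\nu$. A secondary technical issue is the convergence of the integrals when $\zeta\in\supp\nu$, near a cluster point of $\cZ(B)$, or when $f_o$ does not admit a finite nonzero radial limit at $\zeta$: in these cases both sides of \eqref{formuleRS} equal $+\infty$, and this must be verified separately by monotone approximation on positive truncations of the inner factor.
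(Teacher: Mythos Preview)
The paper does not prove this statement: its entire proof is the single line ``See \cite[Theorem 3.1]{RS1}.'' So there is nothing in the paper to compare your argument against; you are sketching the Richter--Sundberg proof itself rather than a proof the present authors supply.

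For what it is worth, your outline is essentially the approach Richter and Sundberg take in \cite{RS1}: compute $\cD_\zeta$ for a single Blaschke factor and for an atomic singular inner function, pass to general $B$ and $S_\nu$ by monotone limits, and combine with the outer part via the product identity for local Dirichlet integrals. Your identification of the main technical hurdle---that $I(\zeta)$ need not exist as a radial limit, so the product formula must be reinterpreted in terms of $|f_o|$, $\cZ(B)$, and $\nu$ alone---is exactly right, and your proposed workaround (monotone approximation, with both sides equal to $+\infty$ in the degenerate cases) is how one handles it. If you want to write this up as a self-contained argument you will need to make the cross-term cancellation in the product formula precise and justify the interchange of limits carefully, but the strategy is sound.
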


\begin{proof} 
See \cite[Theorem 3.1]{RS1}.

\end{proof}

\begin{theo}\label{fgincRS} 
Let $f,g\in \cD(\mu)$.  If $|f(z)|\leq |g(z)|$ on $\DD$, 
then $[f]_{\cD(\mu)}\subset[g]_{\cD(\mu)}.$
\end{theo}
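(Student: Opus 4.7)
The plan is a dilation-and-polynomial-approximation scheme. Since $|f(z)|\le |g(z)|$ on $\DD$ and we may assume $g\not\equiv 0$, the quotient $h:=f/g$ lies in $H^\infty$ with $\|h\|_\infty\le 1$, so $f=hg$; the goal is to express $f$ as a $\cD(\mu)$-limit of polynomial multiples of $g$. For $r\in(0,1)$ set $h_r(z):=h(rz)$, which is holomorphic on $\{|z|<1/r\}$.

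I would first show $h_rg\to f$ in $\cD(\mu)$ as $r\to 1^-$. Convergence in $H^2$ is immediate by dominated convergence with majorant $2|g|$. For the Dirichlet seminorm, split the boundary increment
\[
((h_r-h)g)(e^{it})-((h_r-h)g)(\zeta)=(h_r-h)(e^{it})(g(e^{it})-g(\zeta))+g(\zeta)\bigl((h_r-h)(e^{it})-(h_r-h)(\zeta)\bigr),
\]
square and divide by $|e^{it}-\zeta|^2$ to obtain
\[
\cD_\zeta((h_r-h)g)\le 2\int_\TT|h_r-h|^2(e^{it})\frac{|g(e^{it})-g(\zeta)|^2}{|e^{it}-\zeta|^2}\frac{dt}{2\pi}+2|g(\zeta)|^2\cD_\zeta(h_r-h).
\]
The first term vanishes in $L^1(\mu)$ by two applications of dominated convergence (dominating function $4\cD_\zeta(g)\in L^1(\mu)$). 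For the second, the reverse Aleman-type estimate $\tfrac{1}{2}|g|^2\cD_\zeta(h)\le \cD_\zeta(hg)+\|h\|_\infty^2\cD_\zeta(g)$ (obtained from $|a+b|^2\ge\tfrac{1}{2}|b|^2-|a|^2$ at the boundary and integration against $dt$) combined with $hg=f\in\cD(\mu)$ implies $h\in \cD(|g|^2d\mu)$, where $|g|^2d\mu$ is a finite measure by the standard embedding $|g(\zeta)|^2\le\|g\|_{H^2}^2+4\cD_\zeta(g)$. The classical convergence of dilations in any Dirichlet space then gives $\cD_{|g|^2d\mu}(h_r-h)\to 0$, as required.

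For the polynomial step, fix $r\in(0,1)$: the Taylor polynomials $p_n$ of $h_r$ converge to $h_r$ in $C^1(\overline{\DD})$ since $h_r$ is holomorphic on a strictly larger disk. The Leibniz bound
\[
\cD_\mu((p_n-h_r)g)\le 2\|p_n'-h_r'\|_\infty^2\int_\DD|g|^2P_\mu\,dA+2\|p_n-h_r\|_\infty^2\cD_\mu(g),
\]
together with the estimate $\int_\DD|g|^2P_\mu\,dA\le C\mu(\TT)\|g\|_{H^2}^2$ (a Schur-test bound on the Taylor-coefficient kernel $1/(\max(n,m)+1)$), shows $p_ng\to h_rg$ in $\cD(\mu)$. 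A diagonal argument then yields polynomials $q_k$ with $q_kg\to f$ in $\cD(\mu)$; closedness and shift-invariance of $[g]_{\cD(\mu)}$ force $f\in [g]$, hence $[f]\subset[g]$.

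The delicate point is the second piece of the first step, namely $\cD_{|g|^2d\mu}(h_r-h)\to 0$. A naive dominated convergence at the level of $\cD_\zeta(h_r-h)$ alone fails because $h\in H^\infty$ typically lies outside $\cD(\mu)$, so $\cD_\zeta(h_r-h)$ is not individually controlled. The essential input is the symmetric product inequality, which upgrades the assumption $hg=f\in\cD(\mu)$ into the weighted membership $h\in\cD(|g|^2d\mu)$; the dilation property of this auxiliary Dirichlet space then closes the argument without circularity.
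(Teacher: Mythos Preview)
The paper does not prove this statement; its entire proof is the citation ``See \cite[Theorem 4.1]{RS1}.'' There is therefore no in-paper argument to compare your proposal against.

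Your sketch is a faithful reconstruction of the Richter--Sundberg mechanism and is essentially correct. The decisive step---upgrading $hg=f\in\cD(\mu)$ and $h\in H^\infty$ to $h\in\cD(|g|^2\,d\mu)$ via the reverse product inequality---is exactly the idea behind their proof, and your two-term splitting of $\cD_\zeta\bigl((h_r-h)g\bigr)$ is the natural way to exploit it. The polynomial step is fine; your Schur-test claim for $\int_\DD|g|^2P_\mu\,dA$ holds because the kernel $1/(\max(n,m)+1)$ equals $C+C^*-D$ with $C$ the Ces\`aro matrix, hence is bounded on $\ell^2$.

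Two points to tighten. First, you invoke ``classical convergence of dilations in any Dirichlet space'' for the auxiliary space $\cD(|g|^2\,d\mu)$. This is true, but it is not a triviality: pointwise domination $\cD_\zeta(h_r)\le\cD_\zeta(h)$ can fail, so one needs either the density of polynomials in $\cD(\nu)$ together with a uniform bound on the dilation operators, or a direct weak-convergence-plus-norm argument. In \cite{RS1} the relevant facts are established \emph{before} Theorem~4.1, so there is no circularity, but since you are rebuilding the theorem from scratch you should say explicitly which earlier result you are calling on. Second, the pointwise estimate is $|g(\zeta)|^2\le 2\|g\|_{H^2}^2+2\cD_\zeta(g)$ (from $g(\zeta)=g(0)+\zeta\tilde g(0)$ with $\|\tilde g\|_{H^2}^2=\cD_\zeta(g)$), not quite the constants you wrote; this is cosmetic and does not affect the finiteness of $|g|^2\,d\mu$.
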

\begin{proof}
See \cite[Theorem 4.1]{RS1}.

\end{proof}

If $f$ and $g$ are outer functions, then we define the outer function function $f\wedge g$ by 
$\min\{|f(e^{it})|,|g(e^{it})|\}$ { and $f\vee g$ by $\max\{|f(e^{it})|,|g(e^{it})|\}$  . 

\begin{theo} \label{vee}Let $f,g$ be outer functions in $\cD(\mu)$, then $f\wedge g$ and $f\vee g$ belongs to $\cD(\mu)$ and $\|f\wedge g\|_\mu^2\leq \|f\|_\mu^2+\|g\|_\mu^2$ and  $\|f \vee g\|_\mu^2\leq \|f\|_\mu^2+\|g\|_\mu^2$.
\end{theo}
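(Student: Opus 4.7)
The plan is to prove the stronger estimate $\|f\wedge g\|_\mu^2 + \|f\vee g\|_\mu^2 \le \|f\|_\mu^2 + \|g\|_\mu^2$, from which the two claimed inequalities follow since each summand on the left is nonnegative. Because $\min(a,b)^2 + \max(a,b)^2 = a^2 + b^2$ for all $a,b \ge 0$, applying this pointwise on $\TT$ to the boundary moduli yields the exact $H^2$ identity
\[
\|f\wedge g\|_{H^2}^2 + \|f\vee g\|_{H^2}^2 = \|f\|_{H^2}^2 + \|g\|_{H^2}^2,
\]
which in particular gives $f\wedge g,\, f\vee g \in H^2$; both are outer by construction. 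It therefore suffices to establish the analogous Dirichlet bound
\[
\cD_\mu(f\wedge g) + \cD_\mu(f\vee g) \le \cD_\mu(f) + \cD_\mu(g).
\]

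Next I would apply Theorem \ref{forRS} pointwise in $\zeta$. Since $f, g \in \cD(\mu)$, the local Dirichlet integrals $\cD_\zeta(f), \cD_\zeta(g)$ are finite for $\mu$-a.e.\ $\zeta$, which via the Richter--Sundberg formula forces $|f(\zeta)|, |g(\zeta)| > 0$ for $\mu$-almost every $\zeta$. Writing
\[
\phi(x,y) := x^2 - y^2 - 2y^2 \log(x/y), \qquad x,y > 0,
\]
and noting that each of the four outer functions $h \in \{f, g, f\wedge g, f\vee g\}$ has trivial Blaschke and singular factors, Theorem \ref{forRS} gives
\[
\cD_\zeta(h) = \int_\TT \frac{\phi(|h(\lambda)|, |h(\zeta)|)}{|\lambda-\zeta|^2}\,\frac{|d\lambda|}{2\pi}.
\]
Since the boundary moduli of $f\wedge g$ and $f\vee g$ are, respectively, $\min(|f|,|g|)$ and $\max(|f|,|g|)$ a.e.\ on $\TT$, the desired Dirichlet bound reduces, after integration against $d\mu(\zeta)$, to the pointwise inequality: for all $a,b,A,B > 0$,
\[
\phi(a,A) + \phi(b,B) \;\ge\; \phi(a\wedge b,\, A\wedge B) + \phi(a\vee b,\, A\vee B).
\]

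The main technical step, and the one I expect to be the principal obstacle, is the identity
\[
\phi(a,A) + \phi(b,B) - \phi(a,B) - \phi(b,A) \;=\; 2(A^2 - B^2)\,\log(b/a),
\]
which follows by a direct expansion of the logarithmic terms. If $(a-b)(A-B) \ge 0$, then the minima and maxima match in order on both sides of the target inequality, which therefore holds with equality. Otherwise $a,b$ are ordered oppositely to $A,B$; the right-hand side then equals $\phi(a,B) + \phi(b,A)$, and $A^2-B^2$ and $\log(b/a)$ have the same sign, so the displayed identity is nonnegative. Integrating the pointwise inequality against $|\lambda-\zeta|^{-2}\,|d\lambda|/(2\pi)$ in $\lambda$ and then against $d\mu(\zeta)$ yields the Dirichlet estimate and completes the proof.
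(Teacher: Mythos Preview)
The paper does not give its own proof of this statement; it simply cites \cite[Lemma 2.2]{RS2}. Your proposal supplies an actual argument, and it is essentially the one Richter and Sundberg give there: reduce to the pointwise inequality for the function $\phi(x,y)=x^2-y^2-2y^2\log(x/y)$ and verify that inequality by the identity $\phi(a,A)+\phi(b,B)-\phi(a,B)-\phi(b,A)=2(A^2-B^2)\log(b/a)$. That computation, and the case split on the sign of $(a-b)(A-B)$, are correct.

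Two points deserve attention. First, the assertion that $\cD_\zeta(f)<\infty$ ``forces $|f(\zeta)|>0$'' is false in general (take $\mu=\delta_1$ and $f(z)=1-z$); fortunately your pointwise inequality extends by continuity to $A=0$ or $B=0$ (with $\phi(x,0)=x^2$), so nothing is lost. Second, and more substantively, when you invoke Theorem~\ref{forRS} for $h=f\wedge g$ you implicitly use that $(f\wedge g)(\zeta)$ exists with modulus $\min(|f(\zeta)|,|g(\zeta)|)$, which is exactly what is not yet known. The clean way around this is to use the stronger statement from \cite{RS1}: for outer $h$ and any $c\ge 0$, the integral $\int_\TT \phi(|h(\lambda)|,c)\,|\lambda-\zeta|^{-2}\,|d\lambda|/(2\pi)$ is finite for at most one value of $c$, and when it is finite one necessarily has $c=|h(\zeta)|$ and the integral equals $\cD_\zeta(h)$. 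Your pointwise bound, together with $\phi\ge 0$, shows this integral is finite for $h=f\wedge g$ with $c=\min(|f(\zeta)|,|g(\zeta)|)$ whenever $\cD_\zeta(f)+\cD_\zeta(g)<\infty$; the existence of $(f\wedge g)(\zeta)$ and the desired bound on $\cD_\zeta(f\wedge g)$ then follow. With this adjustment the argument is complete and matches the approach of \cite{RS2}.
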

\begin{proof} see \cite[Lemma 2.2]{RS2}
\end{proof}
}

\begin{theo}\label{fginterRS}
Let $f,g\in \cD(\mu)$ be outer functions and let $h$ be the outer function 
given by $|h|:=|f|\wedge |g|$ a.e on $\TT$. Then 
$[h]_{\cD(\mu)}=[f]_{\cD(\mu)}\cap[g]_{\cD(\mu)}$. If further  $fg\in \cD(\mu)$, then
 $$[fg]_{\cD(\mu)}=[f]_{\cD(\mu)}\cap[g]_{\cD(\mu)}.$$
\end{theo}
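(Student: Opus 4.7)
The plan is to first prove the equality $[h]_{\cD(\mu)}=[f]_{\cD(\mu)}\cap[g]_{\cD(\mu)}$, and then deduce the statement concerning $[fg]$. The easy inclusion $[h]\subset [f]\cap[g]$ follows from the Poisson representation of the modulus of an outer function: since $f$, $g$, $h$ are outer with $|h|=\min(|f|,|g|)\leq |f|,|g|$ a.e.\ on $\TT$, we have $|h(z)|\leq |f(z)|$ and $|h(z)|\leq |g(z)|$ for all $z\in\DD$, so Theorem \ref{fgincRS} yields $[h]\subset [f]$ and $[h]\subset [g]$.

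The heart of the proof is the reverse inclusion $[f]\cap[g]\subset [h]$. Given $F\in [f]\cap[g]$, the goal is to approximate $F$ in $\cD(\mu)$ by polynomial multiples of $h$. A direct attempt---factor $f=\alpha h$ with $\alpha=f/h\in H^\infty$ and push the approximants $p_nf\to F$ through $\alpha^{-1}$ to get $p_nh\to \alpha^{-1}F$---fails because $\alpha^{-1}$ is unbounded. Following the Richter--Sundberg strategy, I would introduce bounded truncations: for each $n\geq 1$, let $h_n$ be the outer function with $|h_n|=\min(|f|,|g|,n)$ on $\TT$. Two applications of Theorem \ref{vee} show that $h_n\in H^\infty\cap\cD(\mu)$ with norm bounded independently of $n$, and $|h_n|\nearrow |h|$ on $\TT$, hence $[h_n]\subset [h]$ by Theorem \ref{fgincRS}. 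Using the boundedness of $h_n$, one transfers the two approximations of $F$ (by polynomial multiples of $f$ and of $g$) into approximations of $h_nF$ by polynomial multiples of $h_n$, so that $h_nF\in[h_n]\subset[h]$, and then shows $h_nF\to F$ in $\cD(\mu)$ as $n\to\infty$.

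For the $fg$ case, assuming $fg\in\cD(\mu)$, the elementary identity $\min\cdot\max=\text{product}$ gives
\[
|fg|=|h|\cdot|f\vee g|\quad\text{a.e.\ on }\TT,
\]
which factors $fg=h\cdot(f\vee g)$ as a product of outer functions, both belonging to $\cD(\mu)$ by Theorem \ref{vee}. The equality $[fg]=[f]\cap[g]$ reduces via the first part to $[fg]=[h]$, which is then obtained by combining the factorization with a truncation argument analogous to the one above, noting that $f\vee g$ is approximable by polynomials in $\cD(\mu)$ and that $h=fg/(f\vee g)$ in the sense of outer functions.

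The main obstacle throughout is that bounded outer functions do not in general act continuously by multiplication on $\cD(\mu)$: only a strict subclass of $H^\infty$ forms the multiplier algebra, so the crude idea of dividing approximants by $f/h$ or $g/h$ is unavailable. The decisive technical input is a quantitative control on $\cD_\mu(F-h_nF)$ in the truncation step, for which the Richter--Sundberg formula \eqref{formuleRS} for the local Dirichlet integral, applied term by term to the inner-outer factorization of $F$ and to the truncations $h_n$, is the essential tool; dominated convergence then yields the convergence $h_nF\to F$ and closes the argument.
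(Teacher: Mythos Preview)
The paper does not prove this theorem at all: its ``proof'' is the single line ``See \cite[Theorem 4.5]{RS2}.'' So there is no argument in the paper to compare your sketch against; the result is imported wholesale from Richter--Sundberg.

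That said, your sketch has two concrete problems at the crucial step. First, with your truncation $|h_n|=\min(|f|,|g|,n)$ you have $|h_n|\nearrow |h|$, hence $h_n\to h$ (not $h_n\to 1$), so the claimed convergence ``$h_nF\to F$'' cannot hold; at best $h_nF\to hF$, which only yields $hF\in[h]$, not $F\in[h]$. If you intended a truncation that tends to $1$ (e.g.\ the outer function with modulus $\min(1,n|h|)$ or $|h|/(|h|+1/n)$), you should say so, and then you must re-justify why such a function generates a subspace contained in $[h]$ and why it interacts correctly with $f$ and $g$.

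Second, and more seriously, the ``transfer'' step---from $F\in[f]\cap[g]$ to $h_nF\in[h_n]$---is exactly where the difficulty lies, and you have not explained a mechanism. Boundedness of $h_n$ does not make it a multiplier of $\cD(\mu)$, so you cannot pass from $p_kf\to F$ to $p_kh_n\to (h_n/f)F$ (and even if you could, $(h_n/f)F$ is not $h_nF$). Nor is the quotient $f/h_n$ bounded, so you cannot go the other way. The actual Richter--Sundberg argument relies on a sharper structural fact about cyclic subspaces of outer functions (derived from the local Dirichlet formula) rather than a naive multiplication; your sketch names the right ingredient (formula~\eqref{formuleRS}) but does not indicate how it bridges this gap. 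As written, the reverse inclusion $[f]\cap[g]\subset[h]$ is unproved.
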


\begin{proof}
See \cite[Theorem 4.5]{RS2}.

\end{proof}

\begin{theo}\label{RSthm}
Let $\cM\in\Lat(S,\cD(\mu))$. Then there exists a multiplier $\phi$ of $\cD(\mu)$ 
such that $\cM=[\phi]_{\cD(\mu)}$.
\end{theo}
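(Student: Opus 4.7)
The plan is to apply Richter's wandering subspace theorem for analytic $2$-isometries. First I would verify that the shift $S$ on $\cD(\mu)$ is a $2$-isometry: from the local Dirichlet integral formula one derives the identity $\|zf\|_\mu^2 = \|f\|_\mu^2 + \|f\|_{H^2}^2$, whence $\|z^2 f\|_\mu^2 - 2\|zf\|_\mu^2 + \|f\|_\mu^2 = 0$. The analyticity $\bigcap_n S^n \cD(\mu) = \{0\}$ is immediate from the continuous inclusion $\cD(\mu)\subset H^2$. Richter's theorem then gives, for any $\cM \in \Lat(S,\cD(\mu))$,
\[
\cM = \bigvee_{n\geq 0} z^n W, \qquad W = \cM \ominus z\cM.
\]

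The next step is to show $\dim W = 1$ whenever $\cM \neq \{0\}$. Taking the closure $\widetilde{\cM}$ of $\cM$ in $H^2$, Beurling's theorem identifies $\widetilde{\cM} = \Theta H^2$ for some inner $\Theta$, so that $\dim(\widetilde{\cM} \ominus_{H^2} z\widetilde{\cM}) = 1$. A density and orthogonality argument transfers this index-one property back to $\cM$, forcing $\dim W \leq 1$. Choosing a unit vector $\phi \in W$, the orthogonality relations $\langle \phi, z^n \phi\rangle_\mu = 0$ for $n \geq 1$ are precisely the defining conditions of an \emph{extremal function}, and the wandering decomposition yields
\[
\cM = \overline{\Span}\{z^n \phi : n \geq 0\} = [\phi]_{\cD(\mu)}.
\]

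The main obstacle is to show that the extremal generator $\phi$ is in fact a multiplier of $\cD(\mu)$. The Richter--Sundberg strategy proceeds in two stages. First, the extremality conditions $\langle \phi, z^n \phi\rangle_\mu = 0$ combined with the local Dirichlet formula (Theorem~\ref{forRS}) are leveraged to prove the contractivity bound $\|\phi\|_\infty \leq 1$. Second, with $\phi$ bounded one establishes the structural identity $\cM = \phi\, \cD(|\phi|^2 d\mu)$. Since $|\phi|^2 d\mu \leq d\mu$ implies the reverse inclusion $\cD(\mu) \subset \cD(|\phi|^2 d\mu)$ at the level of local Dirichlet integrals, multiplication by $\phi$ sends $\cD(\mu)$ into $\phi\,\cD(|\phi|^2 d\mu) = \cM \subset \cD(\mu)$, and the closed graph theorem upgrades this to continuity. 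The delicate technical heart of the argument is extracting $\|\phi\|_\infty \leq 1$ from the abstract extremality conditions, because without this boundedness the structural identity $\cM = \phi\,\cD(|\phi|^2 d\mu)$ does not yield a bounded multiplier.
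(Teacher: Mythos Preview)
The paper does not actually prove this statement: its entire proof is the citation ``See \cite[Theorem 3.5]{RS2} and \cite[Theorem 3.2]{RS3}.'' Theorem \ref{RSthm} is quoted as background from Richter--Sundberg, so there is no in-paper argument to compare against. Your outline is an accurate high-level summary of the Richter--Sundberg proof in those references: the $2$-isometry identity, Richter's wandering subspace theorem, the codimension-one property $\dim(\cM\ominus z\cM)=1$, and finally the multiplier property via $\|\phi\|_\infty\le 1$ together with the structural identity $\cM=\phi\,\cD(|\phi|^2 d\mu)$.

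The one place where your sketch is thin is the step $\dim(\cM\ominus_\mu z\cM)\le 1$. Your proposed route, passing to the $H^2$-closure $\widetilde{\cM}=\Theta H^2$ and invoking ``a density and orthogonality argument'' to transfer the index-one property back, does not obviously work as stated: the $\cD(\mu)$-orthogonal complement $\cM\ominus_\mu z\cM$ and the $H^2$-orthogonal complement $\widetilde{\cM}\ominus_{H^2} z\widetilde{\cM}$ are computed in different inner products, and density of $\cM$ in $\widetilde{\cM}$ alone does not give a linear map between them with controlled kernel. The actual argument in \cite{RS2} obtains the codimension-one property from the $2$-isometry structure itself (via the associated Liouville-type identity and the concavity of $n\mapsto\|S^n f\|^2$), not by comparison with $H^2$. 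This is a genuine technical point, though not a fatal one since you correctly flag the multiplier step as the delicate heart and treat the rest as an outline.
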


\begin{proof} 
See \cite[Theorem 3.5]{RS2} and \cite[Theorem 3.2]{RS3}.

\end{proof}
 
\begin{theo}\label{RSinvar} Let  $\cM\in \Lat(S,\cD(\mu))$, and let 
$\Theta_\cM$ the greatest common inner divisor  of  $\cM$. 
Then, there is an  outer  $f\in   \cD(\mu)\cap H^{\infty}$ such that 
\[
\cM=\Theta_\cM H^2\cap [f]_{  \cD(\mu)}. 
\]
In fact $f$ can be chosen so that $f$ and $\Theta_\cM f$ 
are multipliers of $\cD(\mu)$. 
\end{theo}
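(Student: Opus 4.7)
The plan is to apply Theorem~\ref{RSthm} to obtain a multiplier $\phi \in \cM$ with $\cM = [\phi]_{\cD(\mu)}$, factor $\phi = \Theta \phi_o$ into its inner and outer parts, and then take $f := \phi_o$. The goal is to show $\Theta = \Theta_\cM$, that $\phi_o$ is itself a multiplier lying in $\cD(\mu) \cap H^\infty$, and that $\cM = \Theta_\cM H^2 \cap [\phi_o]_{\cD(\mu)}$. The statement about both $\phi_o$ and $\Theta_\cM \phi_o = \phi$ being multipliers then follows automatically.

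First I identify $\Theta = \Theta_\cM$: divisibility of the inner factor by $\Theta$ is preserved under multiplication by polynomials and under $\cD(\mu)$-limits (via the $H^2$ convergence), and conversely $\Theta_\cM \mid \Theta$ since $\phi \in \cM$. Next I exploit the Richter--Sundberg formula~(\ref{formuleRS}) to obtain
\[
\cD_\zeta(\phi) = K_\Theta(\zeta)\,|\phi_o(\zeta)|^2 + \cD_\zeta(\phi_o),
\]
where $K_\Theta(\zeta) \ge 0$ bundles the Blaschke and singular contributions of $\Theta$. Integrating against $\mu$ gives $\cD_\mu(\phi_o) \le \cD_\mu(\phi) < \infty$, and $|\phi_o| = |\phi|$ on $\TT$ yields $\phi_o \in H^\infty$. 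Applying the same identity to $f\phi$ and $f\phi_o$ for a polynomial $f = \Theta_f f_o$ (they share the outer part $f_o \phi_o$, but $f\phi$ carries the additional inner factor $\Theta$) produces
\[
\cD_\zeta(f\phi) - \cD_\zeta(f\phi_o) = K_\Theta(\zeta)\,|f_o(\zeta)\phi_o(\zeta)|^2 \ge 0,
\]
whence $\cD_\mu(f\phi_o) \le \cD_\mu(f\phi) \le C\|f\|_\mu^2$. Combined with $\|f\phi_o\|_{H^2} \le \|\phi_o\|_\infty \|f\|_{H^2}$ and the density of polynomials in $\cD(\mu)$, this shows $\phi_o$ is a multiplier.

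For the equality $\cM = \Theta H^2 \cap [\phi_o]_{\cD(\mu)}$, the inclusion $\subseteq$ is direct: $\Theta = \Theta_\cM$ divides the inner factor of every element of $\cM$, and $|\phi(z)| \le |\phi_o(z)|$ on $\DD$ combined with Theorem~\ref{fgincRS} gives $\cM = [\phi]_{\cD(\mu)} \subseteq [\phi_o]_{\cD(\mu)}$. For the reverse inclusion, take $g = \Theta h \in \Theta H^2 \cap [\phi_o]_{\cD(\mu)}$; a further application of~(\ref{formuleRS}) gives $h \in \cD(\mu)$. The task is to produce polynomials $r_n$ such that $r_n\phi = r_n\Theta\phi_o \to g$ in $\cD(\mu)$. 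When $\Theta = B_\gamma$ is a single Blaschke factor, starting from an approximation $p_n\phi_o \to g = B_\gamma h$ in $\cD(\mu)$, the continuity of point evaluation at $\gamma$ together with $\phi_o(\gamma) \neq 0$ force $p_n(\gamma) \to 0$; after subtracting $p_n(\gamma)$ and dividing out $(z-\gamma)$ one produces the desired polynomial multiples of $B_\gamma\phi_o = \phi$ converging to $g$, and finite Blaschke products follow by iteration.

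The main obstacle is extending this strip-off procedure to a general inner $\Theta$ whose Blaschke part is infinite or whose singular component is non-trivial: there is no point-evaluation criterion picking out divisibility by the singular factor, and naive iteration cannot handle infinitely many zeros. The Richter--Sundberg strategy is to approximate $\Theta$ by finite Blaschke products (or by truncations of its singular measure) and pass to the limit using the identity
\[
\cD_\mu(\Theta u_n) - \cD_\mu(u_n) = \int_\TT K_\Theta(\zeta)\,|u_n(\zeta)|^2\, d\mu(\zeta),
\]
the integrability $\int K_\Theta |\phi_o|^2\, d\mu < \infty$ (a restatement of $\phi \in \cD(\mu)$), and the multiplier bounds on $\phi$ and $\phi_o$ secured above to control the error terms uniformly. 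This delicate limiting procedure is the technical heart of Theorem~3.5 of~\cite{RS2} and Theorem~3.2 of~\cite{RS3}.
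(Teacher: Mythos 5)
The paper does not actually prove Theorem \ref{RSinvar}: it is imported background, with proof by citation to \cite[Theorem 5.3]{RS2}, so your attempt has to be measured against that source rather than against anything internal to the paper. Your preparatory reductions are sound and do reflect how the Richter--Sundberg argument is set up: $\Theta=\Theta_\cM$ because $\Theta H^2$ is closed in $H^2$ and $\cD(\mu)$-convergence dominates $H^2$-convergence; formula \eqref{formuleRS} gives $\cD_\zeta(\phi_o)\le \cD_\zeta(\phi)$, hence $\phi_o\in\cD(\mu)\cap H^\infty$, and the same comparison applied to $p\phi$ and $p\phi_o$ for polynomials $p$, combined with density of polynomials, shows that the outer part of a multiplier is a multiplier; the inclusion $\cM=[\phi]_{\cD(\mu)}\subset \Theta H^2\cap[\phi_o]_{\cD(\mu)}$ follows from Theorem \ref{fgincRS}; and your strip-off argument (continuity of evaluation at an interior zero $\gamma$, $\phi_o(\gamma)\neq 0$, divide out $z-\gamma$) correctly settles the case where $\Theta$ is a finite Blaschke product.

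The genuine gap is the reverse inclusion $\Theta H^2\cap[\phi_o]_{\cD(\mu)}\subset[\phi]_{\cD(\mu)}$ when $\Theta$ has an infinite Blaschke part or a nontrivial singular factor, which is the entire content of the theorem. Your last paragraph does not prove it: the proposed scheme of approximating $\Theta$ by finite Blaschke products (or truncating the singular measure) and ``controlling the error terms uniformly'' is only a heuristic. You give no argument that the approximants $\Theta_n$ produce polynomial multiples of $\phi$ converging to the target $g$ in $\cD(\mu)$ -- pointwise convergence $\Theta_n\to\Theta$ does not give $\cD_\mu$-convergence of $\Theta_n u$ without precisely the uniform estimates that are missing -- and in the purely singular case there is no analogue of the point-evaluation/division step your finite case relies on, so ``iteration'' has no starting point. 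This step is where the real work of \cite[Theorem 5.3]{RS2} and \cite[Theorem 3.2]{RS3} lies (the latter via extremal-function and pseudocontinuation techniques, not a truncation of $\Theta$). As a self-contained proof your proposal is therefore incomplete; since you delegate exactly this step back to \cite{RS2,RS3}, what you have is in effect the paper's own treatment -- a citation -- preceded by a correct reduction of the easy parts, and you should either present it as such or supply the missing limiting argument.
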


\begin{proof}
See \cite[Theorem 5.3]{RS2}.

\end{proof}


\section{Invariant subspaces generated by polynomials }

In this section we characterize closed invariant subspaces generated by polynomials.\\
Let $\zeta \in \TT$. We will say that $\zeta$ is a bounded point evaluation 
of $\cD (\mu)$ if there exists a constant $M>0$ such that for every polynomial $p$ 
$$
|p(\zeta)|\leq M \|p\| _\mu.
$$

It means that the functional $p\to p(\zeta)$ extends to a continuous functional on 
$\cD (\mu)$. Since the polynomials are dense in $\cD (\mu)$,  this extension is unique and will be denoted by $L_\zeta(f)=f(\zeta)$.\\

{Let
 \[
c_\mu ^a(E)=\inf  \big\{ \| f\| _\mu ^2 :\ f \in \cD (\mu)\ \mbox{and }\ |f|\geq 1 \text{ a.e. on a neighborhood of }\ E\big\}.
\]
Since 
\[
\| f\| _\mu \geq \| f_o \| _\mu \geq \| f_o \wedge 1\| _\mu ,
\]
 where $f_o$ is the outer part of $f$,  { see for instance \cite[Corollary 7.6.2]{EKMR}},  then
\[
c_\mu ^a(E)=\inf  \big\{ \| f\| _\mu ^2 :\ f \in \cD (\mu) \text{ is an outer function,  } |f|= 1
 \text{ a.e. on a neighborhood of } E\big\}.
\]
Note that $c_\mu(E)\leq c_\mu^a(E)$. In fact  $c_\mu^a$ is comparable to $c_\mu$ by the following Lemma { (see \cite[Theorem 38]{G1})}. For the sake of completeness,  we give the proof
\begin{lem} We have $c_\mu(E)\leq c_\mu^a(E)\leq 4c_\mu(E)$
\end{lem}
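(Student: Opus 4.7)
The plan is to prove the two inequalities separately: the left one by restriction to the boundary, the right one by analytic completion of a harmonic competitor.

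For $c_\mu(E)\le c_\mu^a(E)$, I would take an outer $f\in\cD(\mu)$ admissible for $c_\mu^a(E)$ (so $|f|\ge 1$ a.e.\ on an open neighborhood $U$ of $E$) and set $u:=|f|$ on $\TT$. Then $u\ge 1$ a.e.\ on $U$ and $\|u\|_{L^2(\TT)}^2=\|f\|_{H^2}^2$. Using the local representation $\cD_\mu(f)=\int_\TT \cD_\xi(f)\,d\mu(\xi)$ from Section~2, its harmonic analogue for $\cD^h(\mu)$, and the pointwise contraction $\bigl||f(\eta)|-|f(\xi)|\bigr|\le|f(\eta)-f(\xi)|$, I obtain $\cD_\mu^h(u)\le\cD_\mu(f)$, so that $\|u\|_\mu^2\le\|f\|_\mu^2$. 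Since $u$ is admissible for $c_\mu(E)$, taking the infimum over $f$ yields the first inequality.

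For $c_\mu^a(E)\le 4c_\mu(E)$, the plan is to start from $u\in\cD^h(\mu)$ with $u\ge 0$ and $u\ge 1$ a.e.\ on an open $U\supset E$, and build an outer competitor from it. By the Markovian contraction property of the Beurling--Deny form on $\cD^h(\mu)$ I may replace $u$ by $u\wedge 1$, so I assume $0\le u\le 1$ and $u=1$ a.e.\ on $U$. Let $G$ be the unique holomorphic function on $\DD$ with $\Re G=P[u]$ and $\Im G(0)=0$; its boundary values are $u+i\tilde u$, and the Cauchy--Riemann equations give $|G'|^2=|\nabla P[u]|^2$, so $\cD_\mu(G)=\cD_\mu^h(u)$. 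Since $|G|\ge\Re G=u=1$ on $U$, the outer part $F$ of $G$ satisfies $|F|=|G|\ge 1$ a.e.\ on $U$, with $\|F\|_\mu\le\|G\|_\mu$ by the inequality $\|g\|_\mu\ge\|g_o\|_\mu$ recalled just before the lemma; replacing $F$ by the outer function with modulus $|F|\wedge 1$ then puts us in the ``$=1$'' form of admissibility for $c_\mu^a(E)$.

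The quantitative estimate is short: the $L^2$ conjugate-function bound gives
\[
\|G\|_{H^2}^2=\|u\|_{L^2(\TT)}^2+\|\tilde u\|_{L^2(\TT)}^2\le 2\|u\|_{L^2(\TT)}^2,
\]
and combining with $\cD_\mu(G)=\cD_\mu^h(u)$ yields $\|F\|_\mu^2\le\|G\|_\mu^2\le 2\|u\|_\mu^2$, which is even stronger than the claimed factor $4$. The only real technical point is the clean verification of the local-integral representation for $\cD_\mu^h$ and the resulting identity $\cD_\mu(G)=\cD_\mu^h(\Re G|_\TT)$, together with the contraction $\cD_\mu^h(|f|)\le\cD_\mu(f)$; both follow directly by applying the Section~2 material pointwise in $\xi\in\supp\mu$, and no serious obstacle is anticipated beyond this bookkeeping.
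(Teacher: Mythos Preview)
Your argument is correct and in fact yields the sharper bound $c_\mu^a(E)\le 2\,c_\mu(E)$. For the upper bound the paper proceeds differently: starting from an admissible $f\in\cD^h(\mu)$, it uses the orthogonal splitting $f=f_1+\overline{f_2}$ with $f_1,f_2\in\cD(\mu)$ and $\|f\|_\mu^2=\|f_1\|_\mu^2+\|f_2\|_\mu^2$, notes that $\max(|f_1|,|f_2|)\ge\frac12|f|\ge\frac12$ a.e.\ on a neighborhood of $E$, and then applies Theorem~\ref{vee} to the outer function $\varphi=f_1\vee f_2$ to get $\|\varphi\|_\mu^2\le\|f\|_\mu^2$; the rescaling $2\varphi$ produces the factor~$4$. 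Your analytic-completion route via the harmonic conjugate bypasses the $\vee$--lemma entirely: the identity $|G'|^2=|\nabla P[u]|^2$ gives $\cD_\mu(G)=\cD_\mu^h(u)$ exactly, and only the $H^2$ part costs a factor~$2$ through the $L^2$ bound for the conjugate function, which explains the improved constant. (Incidentally, since $\Re G=P[u]\ge 0$ on $\DD$ and $G\not\equiv 0$, your $G$ is already outer, so the passage to the outer part is harmless but not needed.) For the inequality $c_\mu(E)\le c_\mu^a(E)$ the paper simply records it before the lemma without argument; your justification via the pointwise contraction $\bigl||f(\eta)|-|f(\xi)|\bigr|\le|f(\eta)-f(\xi)|$ in the local Dirichlet integrals is a clean way to make it explicit.
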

\begin{proof}
Let $\varepsilon\geq 0$ and let $f\in\cD^h(\mu)$ such that $|f|\geq 1$ a.e on a neighborhood of $E$ and $\|f\|_\mu^2\leq c_\mu(E)+\varepsilon$. Write $f=f_1+\overline{f_2}$ where $f_i\in \cD(\mu)$, $f_2(0)=0$, $\|f\|_\mu^2=\|f_1\|_\mu^2+\|f_2\|_\mu^2$ and so for $i=1$ or $i=2$, we have  $|f_i|\geq 1/2$ a.e on a neighborhood of $E$.  

Consider the outer function 
$\varphi=f_1 \vee f_2$ a.e. on $\TT$.  By Theorem \ref{vee},  $\varphi\in \cD(\mu)$ and 
$\|\varphi\|_\mu^2\leq \|f_1\|_\mu^2+\|f_2\|_\mu^2 =\|f\|_\mu^2 $. Since $|\varphi|\geq 1/2$ a.e on a neighborhood of $E$, we get 
$c_\mu^a(E)\leq 4\|\varphi\|_\mu^2\leq 4|f\|_\mu^2\leq 4c_\mu(E)+\varepsilon$.
\end{proof}
}
Using this fact, we have the following lemma.

\begin{lem}\label{capacityevaluation}
Let $\zeta \in \TT$. The following properties are equivalent: 
\begin{enumerate}
\item $\zeta$ is a bounded point evaluation of $\cD (\mu)$.
\item The polynomial $z-\zeta$ is not cyclic for $\cD (\mu)$.
\item  $c_{\mu}(\zeta) >0$.
\end{enumerate}
\end{lem}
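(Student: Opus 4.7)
The plan is to prove the cycle $(1)\Leftrightarrow(2)$, $(3)\Rightarrow(1)$, and $(1)\Rightarrow(3)$. The first equivalence is purely functional-analytic, the implication $(3)\Rightarrow(1)$ follows immediately from the weak-type capacity inequality stated in the introduction, and $(1)\Rightarrow(3)$ is the delicate step which will exploit the $c_\mu^a$-description of capacity just established.

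For $(1)\Rightarrow(2)$, if $z-\zeta$ were cyclic, there would exist polynomials $p_n$ with $(z-\zeta)p_n\to 1$ in $\cD(\mu)$; applying the continuous functional $L_\zeta$ gives $0=(\zeta-\zeta)p_n(\zeta)\to L_\zeta(1)=1$, which is absurd. For $(2)\Rightarrow(1)$, Hahn--Banach produces a bounded linear functional $\Lambda$ on $\cD(\mu)$ with $\Lambda\equiv 0$ on $[z-\zeta]_{\cD(\mu)}$ and $\Lambda(1)=1$. Since every polynomial satisfies $p-p(\zeta)\in (z-\zeta)\CC[z]\subset [z-\zeta]_{\cD(\mu)}$, one reads off $\Lambda(p)=p(\zeta)$, whence $|p(\zeta)|\leq \|\Lambda\|\,\|p\|_\mu$.

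For $(3)\Rightarrow(1)$ I apply the weak-type inequality directly to an arbitrary polynomial $p$ (whose modulus is continuous on $\TT$, so no capacity-zero exceptional set enters). Taking $t=|p(\zeta)|$ and noting that $\zeta\in\{|p|\geq t\}$ gives
\[
c_\mu(\{\zeta\})\leq c_\mu(\{|p|\geq |p(\zeta)|\})\leq \frac{\|p\|_\mu^2}{|p(\zeta)|^2},
\]
which rearranges to $|p(\zeta)|\leq \|p\|_\mu/\sqrt{c_\mu(\{\zeta\})}$.

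Finally, for $(1)\Rightarrow(3)$ I argue the contrapositive. Suppose $c_\mu(\{\zeta\})=0$; by the preceding lemma, $c_\mu^a(\{\zeta\})=0$, so for every $\varepsilon>0$ I can select an outer $f\in\cD(\mu)$ with $|f|=1$ a.e.\ on an open neighborhood $V$ of $\zeta$ and $\|f\|_\mu^2<\varepsilon$. Because $f$ is outer (hence nonvanishing in $\DD$) with unimodular boundary values on the open arc $V$, the Schwarz reflection principle extends $f$ analytically across $V$; in particular $|f(\zeta)|=1$, and replacing $f$ by $\overline{f(\zeta)}f$ I may assume $f(\zeta)=1$. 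Assuming toward a contradiction that $\zeta$ is a bounded point evaluation with constant $M$, the contradiction $1\leq M\sqrt{\varepsilon}$ follows once I identify the abstract extension $L_\zeta(f)$ with the analytic value $f(\zeta)=1$. The main obstacle is precisely this identification, since $L_\zeta$ is defined only by continuity from polynomial evaluation and need not agree with boundary values of a generic $\cD(\mu)$-function. I overcome it by the standard dilation argument: for $r<1$ the function $f_r(z)=f(rz)$ is analytic in a neighborhood of $\overline\DD$, hence a uniform (and therefore $\cD(\mu)$-norm) limit of its Taylor polynomials, giving $L_\zeta(f_r)=f_r(\zeta)=f(r\zeta)$; letting $r\to 1^-$ and using $\|f_r-f\|_\mu\to 0$ together with continuity of $f$ at $\zeta$ gives $L_\zeta(f)=f(\zeta)=1$, completing the proof.
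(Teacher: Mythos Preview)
Your proof is correct, and for the substantive step $(1)\Rightarrow(3)$ it follows exactly the paper's idea: exploit the $c_\mu^a$-description to produce outer $f$ with $|f|=1$ on an arc around $\zeta$, observe that such $f$ extends analytically across the arc so that $|f(\zeta)|=1$, and compare this against the norm bound coming from $L_\zeta$. The paper phrases this directly (obtaining $c_\mu^a(\{\zeta\})\geq 1/\|L_\zeta\|^2$) rather than by contrapositive, but the content is identical.

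The organization differs. The paper runs the cycle $(1)\Rightarrow(3)\Rightarrow(2)\Rightarrow(1)$: for $(3)\Rightarrow(2)$ it simply notes that $\cM_\mu(\{\zeta\})$ is a proper invariant subspace containing $z-\zeta$, and for $(2)\Rightarrow(1)$ it uses the one-dimensional quotient $\cD(\mu)/[z-\zeta]_{\cD(\mu)}$, which is just your Hahn--Banach argument in different clothing. Your route $(3)\Rightarrow(1)$ via the weak-type inequality is a pleasant shortcut that bypasses $(2)$ entirely and even yields the explicit constant $M=c_\mu(\{\zeta\})^{-1/2}$.

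One remark: you are more careful than the paper in justifying $L_\zeta(f)=f(\zeta)$, which the paper asserts without comment. Your dilation argument is fine, but it invokes $\|f_r-f\|_\mu\to 0$ as a known fact; this is true in $\cD(\mu)$ but is not entirely trivial. A self-contained alternative, available since you have already proved $(1)\Rightarrow(2)$, is to note that the analytic extension gives $f-f(\zeta)=(z-\zeta)g$ with $g\in\cD(\mu)\cap H^\infty$ (smoothness near $\zeta$), whence $L_\zeta(f)-f(\zeta)=L_\zeta\big((z-\zeta)g\big)=0$ because $L_\zeta$ annihilates $[z-\zeta]_{\cD(\mu)}$.
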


\begin{proof}   
$(1) \Longrightarrow (3)$: Suppose that  $\zeta$ is a bounded point evaluation for 
$\cD (\mu)$. We will prove that  $c_{\mu} ^a(\zeta) >0$. Let $f \in \cD (\mu)$  
be an outer function such that $|f|=1$ a.e. on an open arc $I$ centered at $\zeta$. 
Then, there is $\lambda \in \TT$ such that
\[
f(z) =\lambda \exp \frac{1}{2\pi}\displaystyle \int _{\TT \setminus I} \frac{\eta +z}{\eta -z}\log |f(\eta)|d\eta|
 , \qquad z\in \DD.
\] 
Hence  $f$ is analytic in a  neighborhood of $\zeta$. Since $L_\zeta $ is bounded, 
we have $|L_\zeta (f)|= |f(\zeta)|= 1$. So  $\| f\| _\mu  \geq 1/\| L_\zeta \| $, 
and  consequently, $c_{\mu} ^a(\zeta) \geq 1/\|L_\zeta \|^2$. This proves that  (1) implies (3). \\

$(3) \Longrightarrow (2)$: Suppose that  $c_{\mu}(\zeta) >0$. Then $\cM_\mu (\{\zeta\} )$ is a proper closed invariant subspace of $\cD (\mu )$ and $z-\zeta$ is not cyclic. \\

$(2) \Longrightarrow (1)$: Suppose now that $z-\zeta$ is not cyclic for $\cD (\mu)$. 
The space $\cD (\mu)/[(z-\zeta)]_{\cD(\mu)}$ is of dimension one. 
Let $\pi $ be the canonical surjection defined by
\[
\begin{array}{lllll}
\pi :&\cD (\mu )& \to  & \cD (\mu)/[(z-\zeta)]_{\cD(\mu)}\\

&f &\to & \pi (f) = f+ [(z-\zeta)]_{\cD(\mu)}\\
\end{array}
\]
and let $\rho$ be the isomorphism from  $\cD (\mu)/[(z-\zeta)]_{\cD(\mu)}$  
to $\CC$ satisfying $\rho (\pi (1))=1$. It is easy to verify that 
$L_\zeta = \rho \circ \pi$, which proves that  $L_\zeta $ is continuous.

\end{proof}

\begin{lem}\label{cyclicpol}
Let $\zeta \in \TT$ and let $n$ be an integer $n\geq 1$. We have
\begin{enumerate}
\item If $c_{\mu}(\zeta) =0$ then $[(z-\zeta)^n]_{\cD(\mu)}= \cD(\mu)$.
\item  If $c_{\mu}(\zeta) >0$ then $[(z-\zeta)^n]_{\cD (\mu)}= \cM _\mu({\zeta})$.
\end{enumerate}
\end{lem}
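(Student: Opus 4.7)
The plan is to reduce both parts to the case $n=1$ by means of Theorem \ref{fginterRS}, and then to handle $n=1$ separately. First I would observe that $z-\zeta$ is outer: it has no zeros in $\DD$, and its boundary modulus $|e^{it}-\zeta|$ has integrable logarithm, so the canonical factorization contains no Blaschke or singular inner factor. Consequently $(z-\zeta)^n$ is an outer polynomial, in particular it lies in $\cD(\mu)$, and Theorem \ref{fginterRS} applied with $f=(z-\zeta)^{n-1}$ and $g=z-\zeta$ yields
$$[(z-\zeta)^n]_{\cD(\mu)}=[(z-\zeta)^{n-1}]_{\cD(\mu)}\cap [z-\zeta]_{\cD(\mu)}.$$
A routine induction on $n$ then reduces each assertion to the case $n=1$. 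For part (1) with $n=1$, Lemma \ref{capacityevaluation} gives directly that $c_\mu(\zeta)=0$ forces $[z-\zeta]_{\cD(\mu)}=\cD(\mu)$, and the displayed identity propagates cyclicity to every power.

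For part (2) with $c_\mu(\zeta)>0$, the inclusion $[z-\zeta]_{\cD(\mu)}\subseteq \cM_\mu(\{\zeta\})$ is immediate: $(z-\zeta)p$ vanishes at $\zeta$ for every polynomial $p$, and $\cM_\mu(\{\zeta\})$ is closed. For the reverse inclusion I would proceed by a codimension argument. Writing $p(z)=p(\zeta)+(z-\zeta)q(z)$ for any polynomial $p$, every polynomial lies in $\CC\cdot 1+[z-\zeta]_{\cD(\mu)}$. Since the sum of a closed subspace and a one-dimensional one is closed and polynomials are dense in $\cD(\mu)$, we obtain $\cD(\mu)=\CC\cdot 1+[z-\zeta]_{\cD(\mu)}$. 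By Lemma \ref{capacityevaluation} the subspace $[z-\zeta]_{\cD(\mu)}$ is proper, so this decomposition is direct and $[z-\zeta]_{\cD(\mu)}$ has codimension exactly one in $\cD(\mu)$. The constant $1$ cannot belong to $\cM_\mu(\{\zeta\})$ (otherwise $1$ would vanish at $\zeta$, contradicting $c_\mu(\{\zeta\})>0$), so the closed intermediate subspace $\cM_\mu(\{\zeta\})$ wedged between $[z-\zeta]_{\cD(\mu)}$ and $\cD(\mu)$ must coincide with $[z-\zeta]_{\cD(\mu)}$.

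The main obstacle I anticipate is justifying $\cD(\mu)=[z-\zeta]_{\cD(\mu)}+\CC\cdot 1$ as a bona fide closed decomposition rather than merely a dense one; this rests on the standard fact that adjoining a finite-dimensional subspace to a closed one leaves the sum closed. Everything else is a direct combination of Lemma \ref{capacityevaluation}, the outer-function factorization of $(z-\zeta)^n$, and Theorem \ref{fginterRS}.
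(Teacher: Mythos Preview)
Your proof is correct and follows the same route as the paper: reduce to $n=1$ via Theorem \ref{fginterRS}, then invoke Lemma \ref{capacityevaluation}. The paper's own proof is a single sentence, and your codimension argument for part (2) simply makes explicit what is already implicit in the proof of Lemma \ref{capacityevaluation} (namely that $\cD(\mu)/[z-\zeta]_{\cD(\mu)}$ is one-dimensional and $L_\zeta$ factors through the quotient, whence $[z-\zeta]_{\cD(\mu)}=\ker L_\zeta=\cM_\mu(\{\zeta\})$).
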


\begin{proof} 
By Theorem \ref {fginterRS}, we have 
$[(z-\zeta)^n]_{\cD(\mu)}= [(z-\zeta)]_{\cD(\mu)}$. 
The two assertions come from Lemma \ref{capacityevaluation}.

\end{proof}

To state the characterization of closed invariant subspaces generated 
by polynomials, we need some notations. \\
To any  $\Lambda =\{ (z_1, n_1),(z_2,n_2),\ldots, (z_k,n_k)\}$,
 where $z_j \in \DD$ and  $n_j\in  \NN ^*$,  we associate  a polynomial 
 $p_ \Lambda  =\displaystyle \Pi _{j=1}^k(z-z_j)^{n_j}$. Let  $E\subset \TT$, we define
\[
{\cM }_\mu ( \Lambda,E)= \{f\in \cD (\mu ):\ f \in p_\Lambda H^{2}\ \mbox{and }\ f_{|E}=0\}.
\]
For a polynomial $p$, let $\Lambda _p= \{ (z,n)\in \DD \times \NN ^*: z  \text{ is a zero of $p$ of order }  n\} $.  
We have 

\begin{theo}\label{pol} Let $p$  be a polynomial. Then
\[
[p]_{\cD (\mu)}= {\cM }_\mu ( \Lambda _p,E),
\]
where $E= \{\zeta \in \TT: \ p(\zeta)=0\ \mbox{and}\ c_{\mu}(\zeta) >0\}$.
\end{theo}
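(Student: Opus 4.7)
The plan is to factor $p = B\, p_o$ where $B(z) = \prod_j \bigl(\frac{z-z_j}{1-\bar z_j z}\bigr)^{n_j}$ is the finite Blaschke product determined by $\Lambda_p$ and $p_o$ is the outer factor of $p$, a polynomial whose zeros are the $\zeta_i \in \TT$ (with multiplicity $m_i$) and points outside $\overline{\DD}$. Note that $p_{\Lambda_p} H^2 = B H^2$, since $p_{\Lambda_p}/B$ is a polynomial nonvanishing on $\overline{\DD}$, hence a unit in $H^\infty$.

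The inclusion $[p]_{\cD(\mu)} \subset \cM_\mu(\Lambda_p,E)$ is immediate: $p$ lies in $\cM_\mu(\Lambda_p,E)$, which is $S$-invariant and closed, because the interior derivative functionals $f \mapsto f^{(k)}(z_j)$ are bounded on $H^2$, while for each $\zeta \in E$ the evaluation $L_\zeta$ is bounded on $\cD(\mu)$ by Lemma \ref{capacityevaluation}. For the reverse inclusion, I first compute $[p_o]_{\cD(\mu)} = \cM_\mu(E)$. Writing $p_o = u \prod_i (z-\zeta_i)^{m_i}$ with $u$ an outer polynomial nonvanishing on $\overline{\DD}$, the functions $u$ and $1/u$ are both multipliers of $\cD(\mu)$, so $u$ is cyclic and $[p_o]_{\cD(\mu)} = \bigl[\prod_i (z-\zeta_i)^{m_i}\bigr]_{\cD(\mu)}$. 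The factors $(z-\zeta_i)^{m_i}$ are outer and their product is a polynomial, hence in $\cD(\mu)$, so iterating Theorem \ref{fginterRS} yields $[p_o]_{\cD(\mu)} = \bigcap_i [(z-\zeta_i)^{m_i}]_{\cD(\mu)}$, and by Lemma \ref{cyclicpol} this equals $\bigcap_{\zeta_i \in E} \cM_\mu(\{\zeta_i\}) = \cM_\mu(E)$.

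Now take $f \in \cM_\mu(\Lambda_p,E)$ and write $f = B g$ with $g \in H^2$; since $B$ extends analytically across $\TT$ with $|B|=1$ and $E \cap \cZ(B) = \emptyset$, the condition $f|_E = 0$ forces $g|_E = 0$. To see $g \in \cD(\mu)$, I apply Theorem \ref{forRS}: $f$ and $g$ share the same outer part and singular inner factor, while the Blaschke factor of $f$ is $B$ times that of $g$, so at $\mu$-almost every $\zeta \in \TT$
\[
\cD_\zeta(f) - \cD_\zeta(g) = \Bigl(\sum_{\gamma \in \cZ(B)} P_{\delta_\gamma}(\zeta)\Bigr) |f(\zeta)|^2 \geq 0,
\]
and integrating against $\mu$ gives $\cD_\mu(g) \leq \cD_\mu(f) < \infty$. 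Hence $g \in \cM_\mu(E) = [p_o]_{\cD(\mu)}$, so polynomials $q_n$ exist with $q_n p_o \to g$ in $\cD(\mu)$; boundedness of $M_B$ on $\cD(\mu)$ (standard for $B$ analytic in a neighborhood of $\overline{\DD}$) then yields $q_n p = B q_n p_o \to B g = f$ in $\cD(\mu)$, placing $f$ in $[p]_{\cD(\mu)}$. The delicate point is precisely showing that $g = f/B$ remains in $\cD(\mu)$; the Richter--Sundberg pointwise formula handles this cleanly by exhibiting $\cD_\mu(f) - \cD_\mu(g)$ as a nonnegative quantity.
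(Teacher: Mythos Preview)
Your proof is correct and is precisely the kind of ``classical argument'' the paper alludes to. The paper's own proof consists of a single line pointing to Lemma~\ref{cyclicpol}, so there is no detailed argument to compare against; your write-up is a faithful expansion of what that line is meant to convey.

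A couple of small remarks. First, the sentence ``the condition $f|_E=0$ forces $g|_E=0$'' is stated before you establish $g\in\cD(\mu)$, but the verification that $L_\zeta(g)=0$ for $\zeta\in E$ really requires $g\in\cD(\mu)$ together with the multiplicativity $L_\zeta(Bg)=B(\zeta)L_\zeta(g)$ (which follows since $B$ is analytic across $\TT$ and Taylor polynomials of $B q$ converge in $\cD(\mu)$ for any polynomial $q$). Reordering these two steps would make the logic cleaner. Second, the ``delicate point'' you single out---that $f/B\in\cD(\mu)$ whenever $f\in\cD(\mu)\cap BH^2$---is indeed the one ingredient beyond Lemma~\ref{cyclicpol}, and your use of Theorem~\ref{forRS} handles it efficiently: the formula shows $\cD_\zeta(g)\le\cD_\zeta(f)$ for $\mu$-a.e.\ $\zeta$ since all three terms in \eqref{formuleRS} are nonnegative and only the Blaschke sum changes. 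An equivalent way to phrase this step is that $M_B$ is bounded below on $\cD(\mu)$ (by the same formula), hence has closed range, so $[Bp_o]_{\cD(\mu)}=B\,[p_o]_{\cD(\mu)}$ directly.
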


\begin{proof}
The proof is based on classical arguments and on the  Lemma \ref {cyclicpol}.

\end{proof}


\section{Closed ideals of $\cD (\mu) \cap H^{\infty}$ }

First, we will state the following lemmas which will be used in the sequel.
\begin{lem}\label{fglem}
Let $f\in  \cD(\mu)\cap H^{\infty}$ and $g\in H^{\infty}$ be outer functions 
such that  $\|f\|_{\infty}\leq 1$ and $\|g\|_{\infty}\leq 1$.  Let $h$ be the outer function given by 
\[
|h(\zeta)|= \left\{
  \begin{array}{ll}
    |f(\zeta)|, & \text{ a.e. on }\; V,\\
    |g(\zeta)|, & \text{ a.e. on }\;  \TT\setminus V, 
  \end{array}
\right.
\]
where  $V$ is a closed neighborhood
 of $\supp(\mu)$.
Then $h\in  \cD(\mu)$  and 
\begin{equation}\label{eqh}
\cD_{\mu}(h)\leq \cD_{\mu}(f)+\frac{\mu(\TT)}{\dist(\mathbb{T}\setminus V, \supp(\mu))^2}
\Big(\|g\|^2_2+2\| \log1/|g|\|_{L^{1}(\TT)}\Big).
\end{equation}
\end{lem}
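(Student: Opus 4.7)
My plan is to apply the Richter--Sundberg local Dirichlet integral formula (Theorem \ref{forRS}). Since $f$, $g$, and $h$ are outer (in particular $h$ has no Blaschke or singular inner factor), that formula collapses to
\begin{equation*}
\cD_\zeta(h) \;=\; \int_\TT \frac{\Psi(|h(\lambda)|,|h(\zeta)|)}{|\lambda-\zeta|^2}\,\frac{|d\lambda|}{2\pi},\qquad \Psi(a,b):=a^2-b^2-2b^2\log(a/b),
\end{equation*}
and identically for $\cD_\zeta(f)$, with the pointwise bound $\Psi(a,b)\geq 0$ for $a,b>0$. Before using it, I would check that $h$ is genuinely a well-defined outer function in $H^\infty$: combining $\log|f|\in L^1(\TT)$ on $V$ (since $f$ is outer) with $\log|g|\in L^1(\TT)$ on $\TT\setminus V$ (since $g$ is outer) yields $\log|h|\in L^1(\TT)$, and $|h|\leq 1$ forces $\|h\|_\infty\leq 1$.

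For $\zeta\in\supp\mu$, the closed-neighborhood assumption gives $\dist(\zeta,\TT\setminus V)\geq d:=\dist(\TT\setminus V,\supp\mu)>0$. I split the $\lambda$-integral according to $\TT=V\cup(\TT\setminus V)$. On $\lambda\in V$ one has $|h(\lambda)|=|f(\lambda)|$ and $|h(\zeta)|=|f(\zeta)|$, so the integrand coincides with the one for $\cD_\zeta(f)$; because $\Psi\geq 0$, dropping the $\TT\setminus V$ part of the $f$-integral gives
\begin{equation*}
\int_V\frac{\Psi(|h(\lambda)|,|h(\zeta)|)}{|\lambda-\zeta|^2}\,\frac{|d\lambda|}{2\pi}\;\leq\;\cD_\zeta(f).
\end{equation*}

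For the $\TT\setminus V$ piece, I pull out the factor $1/d^2$ from the kernel and need a uniform bound in $\zeta$ on the remaining integrand. Expanding
\begin{equation*}
\Psi(|g(\lambda)|,|f(\zeta)|)\;=\;|g(\lambda)|^2 - |f(\zeta)|^2 + 2|f(\zeta)|^2\log|f(\zeta)| + 2|f(\zeta)|^2\log(1/|g(\lambda)|),
\end{equation*}
the two terms $-|f(\zeta)|^2$ and $2|f(\zeta)|^2\log|f(\zeta)|$ are nonpositive because $\|f\|_\infty\leq 1$, so they may be discarded; replacing the prefactor $|f(\zeta)|^2$ in front of the log by $1$ yields the clean bound $\Psi(|g|,|f(\zeta)|)\leq |g|^2+2\log(1/|g|)$. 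Integrating first in $\lambda$, then against $d\mu(\zeta)$ (the bound being $\zeta$-independent), and combining the two pieces produces exactly \eqref{eqh}. The only delicate point is the sign-tracking on $\TT\setminus V$; the hypothesis $\|f\|_\infty\leq 1$ is used precisely to ensure that the two discarded terms are nonpositive, so the inequality goes in the right direction.
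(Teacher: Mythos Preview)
Your proof is correct and follows essentially the same approach as the paper's: both apply the Richter--Sundberg local Dirichlet integral formula \eqref{formuleRS}, split the $\lambda$-integral over $V$ and $\TT\setminus V$, use nonnegativity of $\Psi$ to bound the $V$-piece by $\cD_\zeta(f)$, pull out $1/d^2$ from the kernel on $\TT\setminus V$, and then discard the nonpositive terms in $\Psi(|g(\lambda)|,|f(\zeta)|)$ using $\|f\|_\infty\leq 1$. The only cosmetic difference is that the paper keeps the factor $|f(\zeta)|^2$ in front of $\log(1/|g|)$ through the local estimate and bounds it by $1$ only when integrating against $d\mu$, whereas you discard it pointwise first; the result is the same.
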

\begin{proof}
Note that  $
\|h\|^{2}_2\leq \|f\|^2_2+\|g\|^2_2$, so $h\in H^2.$  {Since $f\in \cD(\mu)$, $\cD_\zeta(f) <\infty$ for  $\mu$--almost every $\zeta\in \TT$.} 
By  \eqref{formuleRS}, for $\zeta\in \supp \mu$,  let   
$\delta=\dist(\mathbb{T}\setminus V, \supp(\mu))$, we have
\begin{eqnarray*}
\cD_{\zeta}(h)&
= &\displaystyle \int_{\lambda\in V}+\displaystyle 
\int_{\lambda\in \mathbb{T}\setminus V}
\frac{|h(\lambda)|^2-|h(\zeta)|^2-2|h(\zeta)|^2\log|{h(\lambda)}/{h(\zeta)}|}{|\lambda-\zeta|^2}\frac{|d\lambda|}{2\pi}\nonumber \\
&\leq  &\cD_{\zeta}(f)+\frac{1}{\delta^2}\displaystyle \int_{\lambda\in \mathbb{T}\setminus V}\Big(|g(\lambda)|^2+2|f(\zeta)|^2
\log\frac{1}{|g(\lambda)|}-|f(\zeta)|^2\log\frac{e}{|f(\zeta)|^2}\Big)\frac{|d\lambda|}{2\pi}\nonumber \\
& \leq & \cD_{\zeta}(f)+\frac{1}{\delta^2}\Big(||g||^2_2+2|f(\zeta)|^2 \displaystyle
 \int_{\mathbb{T}}\log \frac{1}{|g(\lambda)|}\frac{|d\lambda|}{2\pi}\Big).  \nonumber 
\end{eqnarray*}

It's clear that \eqref{eqh} follows from this inequality and thus $h\in \cD(\mu)$.

\end{proof}

\begin{lem}\label{lemmefusion}Let $V=(e^{ia},e^{ib})$,  and let $f\in \cD(\mu)\cap H^\infty$ 
be an outer function.  Let  $f_{V}$ be the outer function defined by 
\[
|f_{V}(\zeta)|=\left\{
\begin{array}{lll}
 |(\zeta-e^{ia})(e^{ib}-\zeta)| |f(\zeta)|,& \text{ a.e. on \; } {\overline{V}},\\
 |(\zeta-e^{ia})(e^{ib}-\zeta)| ,& \text{ on \; }\TT\setminus {\overline{V}}.
\end{array}
\right.
\]
Then $f_{V}\in \cD(\mu)\cap H^\infty$.
\end{lem}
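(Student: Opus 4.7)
The plan is to show $f_V \in H^\infty$ (immediate) and to bound its local Dirichlet integrals $\cD_\zeta(f_V)$ pointwise by those of the known $\cD(\mu)$-functions $qf$ and $q$, where $q(z) = (z - e^{ia})(e^{ib} - z)$. Note that $q$ is a degree-two polynomial without zeros in $\DD$, hence outer, with $|q(\zeta)| = |(\zeta - e^{ia})(e^{ib} - \zeta)|$ on $\TT$ and $\|q\|_\infty \leq 4$. Since polynomials are multipliers of $\cD(\mu)$, the product $qf$ lies in $\cD(\mu) \cap H^\infty$ and is outer. The pointwise bound $|f_V| \leq \max(1, \|f\|_\infty)\,|q|$ on $\TT$ shows $f_V \in H^\infty$.

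The geometric heart of the argument is the following. Since $q$ vanishes exactly on $\partial V = \{e^{ia}, e^{ib}\}$, one has $|q(z)| \leq 2\,\dist(z, \partial V)$ for all $z \in \TT$. If $\zeta \in \overline V$ and $\lambda \in \TT \setminus \overline V$, the open disc $\{w \in \CC : |w - \zeta| < \dist(\zeta, \partial V)\}$ meets $\TT$ in an arc contained in $\overline V$ and therefore does not contain $\lambda$; this forces $|\lambda - \zeta| \geq \dist(\zeta, \partial V)$, and symmetrically $|\lambda - \zeta| \geq \dist(\lambda, \partial V)$. Combining these inequalities, whenever $\zeta$ and $\lambda$ lie on opposite sides of $\partial V$,
$$\frac{|q(\zeta)|^2}{|\lambda - \zeta|^2} \leq 4, \qquad \frac{|q(\lambda)|^2}{|\lambda - \zeta|^2} \leq 4.$$

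Now apply Theorem~\ref{forRS} to the outer functions $f_V$, $qf$, $q$. For $\zeta \in \overline V$, the integrands in the formulas for $\cD_\zeta(f_V)$ and $\cD_\zeta(qf)$ coincide on $\lambda \in \overline V$ (because $|f_V| = |qf|$ there and $|f_V(\zeta)| = |qf(\zeta)|$), while on $\lambda \in \TT \setminus \overline V$ their difference equals
$$\frac{|q(\lambda)|^2(1 - |f(\lambda)|^2) + 2|qf(\zeta)|^2 \log|f(\lambda)|}{|\lambda - \zeta|^2}.$$
Using the geometric estimates, the first summand integrates to at most $4(1 + \|f\|_\infty^2)$, and since $|qf(\zeta)|^2/|\lambda - \zeta|^2 \leq 4\|f\|_\infty^2$ and $\log|f| \in L^1(\TT)$ (as $f$ is outer), the second integrates to at most $8\|f\|_\infty^2 \|\log|f|\|_{L^1(\TT)}$. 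Hence $\cD_\zeta(f_V) \leq \cD_\zeta(qf) + C_1$ for every $\zeta \in \overline V$, and a symmetric argument gives $\cD_\zeta(f_V) \leq \cD_\zeta(q) + C_2$ for $\zeta \in \TT \setminus \overline V$. Integrating against $\mu$ yields
$$\cD_\mu(f_V) \leq \cD_\mu(qf) + \cD_\mu(q) + (C_1 + C_2)\mu(\TT) < \infty,$$
proving $f_V \in \cD(\mu) \cap H^\infty$. The main obstacle is the careful pointwise bookkeeping of the Richter--Sundberg formula: the difference integrand has terms of indefinite sign, and both become uniformly integrable only through the combination of the vanishing of $q$ on $\partial V$ (producing the uniform $4$-bound) and the $L^1$-integrability of $\log|f|$ furnished by outerness of $f$.
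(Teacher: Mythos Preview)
Your proof is correct and follows the same route as the paper: apply the Richter--Sundberg formula \eqref{formuleRS} to the outer function $f_V$, split the integral according to whether $\lambda\in\overline V$ or $\lambda\in\TT\setminus\overline V$, and compare with the corresponding integrals for $qf$ (your $q$ is the paper's $u$) and for $q$; this is exactly the paper's one-line argument leading to $\cD_\zeta(f_V)\le \cD_\zeta(uf)+\cD_\zeta(u)$.

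The main difference is that your write-up actually supplies the estimate the paper leaves implicit. The paper's displayed inequality is stated without justification (and, as written, the integrand shown is that of $u$ rather than $f_V$), whereas you isolate the mechanism that makes the cross terms harmless: the pointwise bounds $|q(\zeta)|^2/|\lambda-\zeta|^2\le 4$ and $|q(\lambda)|^2/|\lambda-\zeta|^2\le 4$ whenever $\zeta$ and $\lambda$ lie on opposite sides of $\partial V$, coming from $|q|\le 2\dist(\cdot,\partial V)$. This is what converts the ``mismatched'' part of the Richter--Sundberg integral into a uniform constant times $1+\|f\|_\infty^2+\|\log|f|\|_{L^1(\TT)}$, after which integration against $\mu$ gives $\cD_\mu(f_V)\le \cD_\mu(qf)+\cD_\mu(q)+C\mu(\TT)<\infty$. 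So your argument is essentially a fully detailed version of the paper's proof rather than a different approach.
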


\begin{proof} 
Set $u(z)=(z-e^{ia})(e^{ib}-z)$, again by  \eqref{formuleRS}, for $\zeta\in \supp \mu$, we have
\begin{eqnarray*}
\cD_{\zeta}(f_V)
&=&\int_{\lambda\in V}+\int_{\lambda\in \mathbb{T}\setminus V}\frac{|u(\lambda)|^2-|u(\zeta)|^2-2|u(\zeta)|^2
\log|{u(\lambda)}/{u(\zeta)}|}{|\lambda-\zeta|^2}\frac{|d\lambda|}{2\pi}\nonumber \\
&\leq& \cD_{\zeta}(uf)+ D_\zeta(u).
\end{eqnarray*}
Clearly,  $f_V\in \cD(\mu)\cap H^\infty$.

\end{proof}

Recall that   $\cD(\mu)\cap H^{\infty}$ is a Banach algebra endowed  with 
the pointwise multiplication and equipped with the norm 
\[
\|f\|_{\infty,\mu}=\|f\|_\infty+\cD_\mu(f)^{1/2}.
\]

\begin{theo}\label{th4}
Let  $\cJ$ be a closed ideal of $\cD(\mu)\cap H^{\infty}$. 
Let  
\[
\pi:  \cD(\mu)\cap H^\infty\longrightarrow  \cD(\mu)\cap H^\infty/\cJ,
\]
 be the canonical surjection. Then 
\[
\sigma(\pi(u))= \underline{\cZ}(\cJ)
\]
 where  $u:z\to z$ is the identity map and $\sigma(\pi(u))$ is the spectrum of  $\pi(u)$.
\end{theo}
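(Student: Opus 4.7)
The plan is to establish the two inclusions separately. Write $A := \cD(\mu)\cap H^\infty$, and observe that $\lambda \in \sigma(\pi(u))$ precisely when no $g \in A$ satisfies $(z-\lambda)g - 1 \in \cJ$.

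For the inclusion $\underline{\cZ}(\cJ) \subseteq \sigma(\pi(u))$, I would argue by contradiction. If $\lambda \in \underline{\cZ}(\cJ)$ and $(z-\lambda)g - 1 = h \in \cJ$ for some $g \in A$, then since $g \in H^\infty$, $|(z-\lambda)g(z)| \leq \|g\|_\infty\,|z-\lambda| \to 0$ as $z \to \lambda$ in $\DD$, which forces $|h(z)| \to 1$. This contradicts $\liminf_{z \to \lambda}|h(z)| = 0$, a consequence of $\lambda \in \underline{\cZ}(h)$ for every $h \in \cJ$.

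For the reverse inclusion, suppose $\lambda \notin \underline{\cZ}(\cJ)$ and pick $f \in \cJ$ with $|f(z)| \geq \delta$ on $\DD \cap B(\lambda,r)$ for some $r,\delta > 0$. When $\lambda \in \DD$, we have $f(\lambda) \neq 0$ and the divided difference $h := (f-f(\lambda))/(z-\lambda)$ is bounded on $\DD$ and belongs to $\cD(\mu)$: indeed $h$ is holomorphic across $\lambda$, so $h'$ is bounded on a small disc around $\lambda$; outside that disc $|z-\lambda|$ is bounded below, which reduces the Dirichlet estimate to $\cD_\mu(f) < \infty$. Consequently $(z-\lambda)(-h/f(\lambda)) = 1 - f/f(\lambda) \equiv 1 \pmod{\cJ}$, so $\pi(u-\lambda)$ is invertible.

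The case $\lambda \in \TT$ is the main difficulty. My plan is to construct an outer function $\phi \in A$ with $|\phi| = 1/|f|$ on a closed arc $\overline V \subseteq \TT$ containing $\lambda$ in its interior (the inequality $|f| \geq \delta$ there guarantees that $1/|f|$ is bounded) and $|\phi|$ equal to a small fixed constant on $\TT \setminus \overline V$. This is the step where Lemmas \ref{fglem} and \ref{lemmefusion} enter decisively: the endpoint-vanishing factor $(z-e^{ia})(e^{ib}-z)$ of Lemma \ref{lemmefusion} smooths the transition between $\overline V$ and its complement, while Lemma \ref{fglem} allows one to prescribe the modulus on a neighbourhood of $\supp \mu$ without destroying finiteness of $\cD_\mu$. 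The product $\psi := \phi f$ then lies in $\cJ$, satisfies $|\psi|\equiv 1$ on $\overline V$, and is non-vanishing in a $\DD$-neighbourhood of $\overline V$; hence Schwarz reflection extends $\psi$ holomorphically across $V$. After rescaling $\phi$ by a unimodular constant so that $\psi(\lambda) = 1$, the function $g := (1-\psi)/(z-\lambda)$ is holomorphic across $\lambda$, bounded on $\DD$, and membership $\psi \in \cD(\mu)$ together with the removability of the apparent singularity of $g'(z) = -\psi'(z)/(z-\lambda) + (\psi(z)-1)/(z-\lambda)^2$ at $\lambda$ yields $g \in \cD(\mu)$. Since $(z-\lambda)g = 1 - \psi \equiv 1 \pmod{\cJ}$, this provides the required inverse. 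The main obstacle throughout is the construction of $\phi$ with finite Dirichlet integral when $\lambda \in \supp\mu$, which is exactly what Lemmas \ref{fglem} and \ref{lemmefusion} were tailored for.
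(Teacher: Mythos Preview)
Your treatment of the inclusion $\underline{\cZ}(\cJ)\subset\sigma(\pi(u))$ and of the case $\lambda\in\DD$ is correct and coincides with the paper's. The overall plan for $\lambda\in\TT$---produce an element of $\cJ$ that is analytic across $\lambda$ with non-zero value there, then pass to the divided difference---is also the paper's strategy, and your final verification that the divided difference lies in $\cD(\mu)$ is fine once such a function is in hand.

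The gap is the construction of $\phi$. Lemmas~\ref{fglem} and~\ref{lemmefusion} do not furnish an outer function with $|\phi|=1/|f|$ on $\overline V$. Lemma~\ref{fglem} only lets you prescribe the modulus \emph{away} from $\supp\mu$ (its $V$ is a neighbourhood of $\supp\mu$, not an arc around $\lambda$), so it gives nothing when $\lambda\in\supp\mu$, which is precisely the hard case. Lemma~\ref{lemmefusion} only multiplies by the polynomial factor $(z-e^{ia})(e^{ib}-z)$; it never produces a reciprocal $1/|f_o|$. There is no a priori reason why the outer function with modulus $1/|f_o|$ on $\overline V$ and constant elsewhere should have finite $\cD_\mu$-integral, and neither lemma supplies one.

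The paper avoids this by a different construction. With $g\in\cJ$ satisfying $|g|\ge c$ near $\lambda$ and $f_w$ a smooth outer function with $|f_w|=c$ near $\lambda$ and $|f_w|=1$ away, it sets $h=g_o\wedge f_w$ and $\widetilde h=g_o\vee f_w$; Theorem~\ref{vee} puts both in $\cD(\mu)$. Since $g f_w=g_i h\,\widetilde h$ and $|\widetilde h|\ge c$ on $\DD$ (so $1/\widetilde h\in A$ via $(1/\widetilde h)'=-\widetilde h'/\widetilde h^2$), one gets $g_i h\in\cJ$. The crucial point is that $|h|\equiv c$ on an arc about $\lambda$, so $h$ (hence $g_i h$) is real-analytic there, and the divided difference $(g_ih-g_ih(\lambda))/(z-\lambda)$ lies in $A$. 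The tool you were missing is thus Theorem~\ref{vee} (stability of $\cD(\mu)$ under $\wedge$ and $\vee$), which turns the problem of ``invert $|f|$ locally'' into ``cap $|g_o|$ by a smooth function,'' a much softer operation.
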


{ \subsection*{Remark} Note that $\sigma(\pi(u))=\emptyset$  if and only if $ \cJ=\cD(\mu)\cap H^\infty$, see  Lemma \ref{lem6}  for more general statement.}

\subsection*{Proof Theorem \ref{th4}}
First, we prove that $ \underline{\cZ}(\cJ)\subset \sigma(\pi(u))$. 
Let $\lambda \notin \sigma(\pi(u))$,  then there exists $f \in \cD(\mu)\cap H^{\infty}$ and $g \in \cJ$ such that $(\lambda -z)f(z) =1-g(z)$. Which gives obviously that $\displaystyle \lim _{z\to \lambda}g(z)=1$ and $\lambda \notin \underline{\cZ}(\cJ)$.\\

Conversely,  let  $\lambda\notin \underline{\cZ}(\cJ)$. {We suppose that $|\lambda|=1$ since the case $|\lambda|\neq 1$ is obvious}. Hence there exists $g\in  \cJ$  and $c>0$ 
such that $ |g(z)|\geq c$  on  a neighborhood  of $\lambda$,  
$V_\lambda=\{z\in \DD\text{ : } |z-\lambda|\leq \delta/2\} $. 

Note that if  $w\in \cC^2(\TT)$ and  $\log |w|\in L^1(\TT)$ then the outer function 
$f_w$ defined by $|f_w|=w$,  a.e.  on $\TT$,  
belongs to $\cC^2(\overline{\DD})\cap\Hol(\DD)\subset \cD(\mu)$.    
Now consider the smooth function $w\in \cC^2(\TT)$ such that $w\geq c>0$  and 
\[
w(\zeta)=\left\{\begin{array}{lll}
c&\text{ if }&|\zeta-\lambda|\leq \delta/2,\\
1&\text{ if }&|\zeta-\lambda|\geq  \delta.\\ 
\end{array}
\right.
\]
Suppose that $\|g\|_\infty\leq 1$.    Write $g=g_ig_o$ the inner--outer factorization of $g$. 
Consider the outer function  
$$|h|=|g_o|\wedge |f_w| =|g_o|\wedge |w|\qquad \text{ a.e. on \; } \TT.$$   
{Note that $|h|= c$ near $\lambda$ and  by Theorem \ref{fginterRS},   $0\neq h\in\cD(\mu)$.}  
Now consider the outer function 
$$|\widetilde{h}(e^{it})|=(|g_o|\vee |f_w|)(e^{it})=\max\{|g_o(e^{it})|,w(e^{it})\},\qquad \text{ a.e. on \; }\TT$$
Clearly,  
\[
gf_w=  g_i(g_o\wedge f_w )\times (g_o\vee f_w)= g_i h \times \widetilde{h} \quad \text{ on }\; \DD
\]
and $ \widetilde{h} \in\cD(\mu)\cap H^\infty$ is invertible. So $g_ih\in \cJ$.  { Since $|g(z)|\geq c$ on $V_\lambda$, then $\lambda$ doesn't  belong to the spectrum  of the inner function $g_i$.  
Therefore  $g_i$ is analytic across an arc that contains $\lambda$, so $g_i(\lambda)$ is well-defined and $|g_i(\lambda)|=1$}. 
Let  
$$\psi=\frac{1}{(g_ih)(\lambda)}\frac{g_ih-g_ih(\lambda)}{u-\lambda}.$$ 
{ Since $(g_i h)(\lambda)\neq 0$, the function $\psi$ is well-defined}.  Note that  the function  $|g_o|\wedge |f_w|=|f_w| $ on 
$\{\zeta\in \TT\text{ : } |\zeta-\lambda|\leq \delta/2\}$  is $\cC^\infty$ on $V_\lambda$ and 
\[
h(z)=\exp\int_{|\zeta-\lambda|\leq\delta/2} \frac{\zeta+z}{\zeta-z}\log c\frac{|d\zeta|}{2\pi} \times
 \exp\int_{|\zeta-\lambda|\geq \delta/2} \frac{\zeta+z}{\zeta-z}\log |h(\zeta)|\frac{|d\zeta|}{2\pi},
 \]
 the two functions are clearly $\cC^\infty$ on $V_\lambda$ and 
 hence $\psi$ is $\cC^2$ on $V_\lambda$. Thus  $\psi\in \cD(\mu)\cap H^\infty$. So 
   $\pi(\psi)(\lambda-\pi(u))=\pi(1)$  and   we get 
$\lambda\notin \sigma(\pi(u))$ which finishes the proof.

\begin{lem}\label{lem68} 
Let  $\cJ$ be a closed ideal of $\cD(\mu)\cap H^{\infty}$ such that $\cJ$ 
contains  an outer function,   and  $\underline{Z}(\cJ)=\{\zeta_0\}\subset \TT$. 
Then $(z-\zeta_0)\in \cJ$.
\end{lem}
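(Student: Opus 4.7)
My starting point is Theorem~\ref{th4} applied to $\cJ$: since $\underline{Z}(\cJ) = \{\zeta_0\}$, the spectrum of $\pi(u)$ in the quotient Banach algebra $B := (\cD(\mu)\cap H^\infty)/\cJ$ is the singleton $\{\zeta_0\}$, so $v := \pi(u - \zeta_0)$ is quasi-nilpotent in $B$. The lemma is equivalent to showing $v = 0$. Since $\cJ$ is closed in the norm $\|\cdot\|_{\infty,\mu}$, it suffices to produce, for every $\varepsilon > 0$, an $F_\varepsilon \in \cJ$ with $\|F_\varepsilon - (z - \zeta_0)\|_{\infty, \mu} < \varepsilon$.

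To build such approximants I would mimic the construction used in the proof of Theorem~\ref{th4}. Fix an outer function $f \in \cJ$ (available by hypothesis) and a short arc $V_\delta \subset \TT$ of radius $\delta$ centered at $\zeta_0$. For each $\lambda \in \TT \setminus V_\delta$, the hypothesis $\lambda \notin \underline{Z}(\cJ)$ provides some $g_\lambda \in \cJ$ with $|g_\lambda| \geq c_\lambda > 0$ on a neighborhood of $\lambda$, and the Theorem~\ref{th4} argument, applied with a smooth weight $w_\lambda$ equal to $c_\lambda$ near $\lambda$ and to $1$ outside a slightly larger neighborhood, produces an element $g_{i,\lambda} h_\lambda \in \cJ$ with $h_\lambda$ outer and $|h_\lambda| = |g_{o,\lambda}| \wedge |f_{w_\lambda}|$; in particular $|g_{i,\lambda} h_\lambda|$ is bounded below by a constant on a neighborhood of $\lambda$. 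Using compactness of $\TT \setminus V_\delta$ and iterating (the ideal $\cJ$ is closed under multiplication of its elements), I would combine finitely many such local pieces to produce a single outer function $H_\delta \in \cJ$ satisfying $|H_\delta| \geq c > 0$ on $\TT \setminus V_\delta$, while its vanishing behaviour near $\zeta_0$ is inherited from the chosen $g_\lambda$'s.

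With $H_\delta$ in hand I would construct $\Psi_\delta \in A$ so that $\Psi_\delta H_\delta$ is close to $z - \zeta_0$ in $\|\cdot\|_{\infty, \mu}$. Outside $V_\delta$ the quotient $(z - \zeta_0)/H_\delta$ is bounded; on $V_\delta$ one extends by an outer-function cut-off built via the fusion Lemmas~\ref{fglem} and~\ref{lemmefusion}, which also supply the crucial Dirichlet-integral estimate needed to control $\cD_\mu(\Psi_\delta H_\delta - (z - \zeta_0))$. Then $\Psi_\delta H_\delta \in \cJ$ and, letting $\delta \to 0$, the error tends to $0$, giving $z - \zeta_0 \in \overline{\cJ} = \cJ$.

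\textbf{Main obstacle.} The delicate point is the combining step producing $H_\delta \in \cJ$ with a uniform lower bound off $V_\delta$. Forming the product of the local pieces $g_{i,\lambda_j} h_{\lambda_j}$ keeps us inside $\cJ$ but dilutes the lower bound (each factor is only bounded below near \emph{its own} $\lambda_j$), while sums permit cancellation and the $\vee$-operation of Theorem~\ref{vee} yields elements of $\cD(\mu)$ that need not lie in $\cJ$. I expect the resolution to rely on iterating the Theorem~\ref{th4} construction---at each stage inserting a factor that corrects the smallness of the previous partial product on the piece of $\TT \setminus V_\delta$ where it is deficient---while always multiplying by elements produced by the Theorem~\ref{th4} machinery so that membership in $\cJ$ is preserved throughout.
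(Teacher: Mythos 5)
Your reduction is correct as far as it goes: by Theorem~\ref{th4} the element $v=\pi(u-\zeta_0)$ is quasi-nilpotent, and the lemma is indeed equivalent to $v=0$. But quasi-nilpotence alone cannot give $v=0$, and the approximation scheme you propose to finish does not close. You yourself flag the unresolved step: producing a single outer $H_\delta\in\cJ$ with a uniform lower bound on $\TT\setminus V_\delta$. None of the operations you consider (products, sums, the $\vee$ of Theorem~\ref{vee}) achieves this while staying in $\cJ$, and ``iterating the Theorem~\ref{th4} construction'' is not an argument: that construction produces inverses of $\lambda-\pi(u)$ \emph{in the quotient algebra}, not elements of $\cJ$ bounded below off an arc. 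Even granting $H_\delta$, the second half is also unsupported: Lemma~\ref{fglem} controls modifications made on $\TT\setminus V$ only when $V$ is a neighborhood of $\supp\mu$, with a constant degenerating like $\dist(\TT\setminus V,\supp\mu)^{-2}$; here the region you must modify is the arc $V_\delta$ centered at $\zeta_0$, which in the relevant applications lies in $\supp\mu$ (indeed $\mu$ may have an atom at $\zeta_0$ with $c_\mu(\{\zeta_0\})>0$), so that distance is $0$ and no estimate for $\cD_\mu\bigl(\Psi_\delta H_\delta-(z-\zeta_0)\bigr)$ follows from the cited lemmas. Since every element of $\cJ$ may decay very fast at $\zeta_0$ (the outer function in $\cJ$ can have modulus $\exp(-|\zeta-\zeta_0|^{-1/2})$, say), convergence of your approximants in the $\cD(\mu)$-norm near $\zeta_0$ is exactly the hard point, and it is nowhere addressed.

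For comparison, the paper's proof is operator-theoretic and shows what quantitative input is really needed. With $T$ the multiplication by $\pi(u)$ on $\cD(\mu)\cap H^\infty/\cJ$, one has $\|T^n\|\leq\|z^n\|_\mu=O(\sqrt{n})$; the outer function $f\in\cJ$ enters through the divided differences $L_\lambda(f)=(f-f(\lambda))/(z-\lambda)$, which give $(\lambda I-T)^{-1}=f(\lambda)^{-1}\pi(L_\lambda(f))$ for $\lambda\in\DD$, hence $\|(\lambda I-T)^{-1}\|=O(e^{\varepsilon/(1-|\lambda|)})$ and, by Cauchy estimates, $\|T^{-n}\|=O(e^{\varepsilon\sqrt{n}})$. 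Atzmon's Phragm\'en--Lindel\"of theorem then yields $(I-T)^2=0$, i.e. $(z-\zeta_0)^2\in\cJ$, and a duality argument removes the square: any annihilator $\ell$ of $\cJ$ satisfies $\widehat{\ell}(n)=a+bn$ while $|\widehat{\ell}(n)|=O(\sqrt{n})$, forcing $b=0$ and $\langle\ell,1-z\rangle=0$. Note that even the passage from $(z-\zeta_0)^2$ to $z-\zeta_0$ uses the $O(\sqrt{n})$ growth in an essential way; a direct approximation of $z-\zeta_0$ along your lines would have to encode this same quantitative information, which the sketch does not do. As it stands, the proposal is a program with its central construction missing, not a proof.
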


\begin{proof} 
We suppose that $\zeta_0=1$. Consider again the canonical surjection
\[
\pi:  \cD(\mu)\cap H^\infty\longrightarrow  \cD(\mu)\cap H^\infty/ \cJ,
\]
and let $u:z\to z$ be the identity map. Given $\lambda\in \DD$  and $f\in \cJ$, we define
\[
L_\lambda(f)(z)=\left\{
\begin{array}{lll}
\displaystyle \frac{f(z)-f(\lambda)}{z-\lambda}\;,& z\in \DD\setminus\{\lambda\},\\
f'(\lambda)\;,&z=\lambda.
\end{array}
\right.
\]
Since $L_\lambda f-L_0 f=\lambda L_\lambda L_0 f$, it is obvious that 
$L_\lambda(f)\in \cD(\mu)\cap H^\infty$.  So  
\[
\pi(L_\lambda(f))(\lambda\pi(1)-\pi(u))=f(\lambda)\pi(1),\qquad \lambda\in  \DD.
\] 
{The operator $T$ defined on $\cD(\mu)\cap H^\infty/\cJ$ by
$$\pi(f)\to \pi(u)\pi(f),$$
has the  spectrum $\sigma(T)=\{1\}$ and for $n\geq 0$
\begin{equation}
\label{pol}
\|T^n\|=\|\pi(u)^n\|\leq \|u^n\|_\mu=\sqrt{1+n\mu(\TT)}.
\end{equation}
On the other hand,  let $f\in \cJ$ be an outer function. So  
\[
 (\lambda\pi(1)-\pi(u))^{-1}=( {1}/{f(\lambda)})\pi(L_\lambda(f)),\qquad \lambda\in \DD.
 \]
Since $f\in H^\infty $ is outer,  for all $\varepsilon >0$, $1/|f(\lambda)|=O(e^{\varepsilon/(1-|\lambda|})$ as $|\lambda|\to1-$. Therefore,  
\[
\|(\lambda I -T)^{-1}\|=O\Big( \exp\frac{\varepsilon}{1-|\lambda|}\Big), \qquad |\lambda|\to 1-,
\]
where $I$ is the identity operator. On the other hand,  by Cauchy inequality, see \cite[Lemma 2]{At}, for all $\varepsilon>0$
\begin{equation}
\label{exp}
\|T^{-n}\|=O(e^{\varepsilon \sqrt{n}}),\qquad n\to +\infty.
\end{equation}
Therefore  the operator $T$ is invertible, $\sigma(T)=\{1\}$ satisfies \eqref{pol} and \eqref{exp}, then it follows from Phragm\'en Lindel\"of principle \cite[Corollary 1]{At}, that  $(I-T)^2=0$, which means that 
$$(1-z)^2\in \cJ.$$} 

Let a functional $\ell$ in the dual space of $\cD(\mu) \cap H^{\infty}$ be  such that  $\ell$ is orthogonal to $\cJ$. We have  
\[
\langle \ell , z^n(1-z)^2 \rangle =0,\qquad n\geq 0.
\]
Let $\widehat{\ell}(n)= \langle \ell, z^n\rangle$, the last equality implies that
\[
\widehat{\ell}(n)-2 \widehat{\ell}(n+1)+\widehat{\ell}(n+2)=0\qquad n\geq 0.
\]
 So $\widehat{\ell}(n)=(a+bn)$ for some constants $a,b\in \CC$. In the other hand 
\[
| \widehat{\ell}(n)|\leq
  \|\ell\|\;  \|z^{n}\|_{\cD(\mu) \cap H^{\infty}}=  O (\sqrt{n}).
  \] 
  Hence $b=0$ and $\langle \ell , (1-z) \rangle =0.$
  Which gives $1-z \in \cJ$ and completes the proof.
   \end{proof}

\subsection*{Remark} 
The preceding result can be extended to closed ideals  
such that  their greatest common inner divisor is $1$. 


\section{Cyclicity in $\cD (\mu)$ }

\begin{lem}\label{lem6}
Let $f\in  \cD(\mu)\cap H^\infty$ be an outer function. If 
$\underline{\cZ}(f)\cap \supp(\mu)=\emptyset$ then  $f$ is cyclic for  $ \cD(\mu)$.
\end{lem}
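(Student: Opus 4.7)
Since $\underline{\cZ}(f)$ and $\supp\mu$ are disjoint closed subsets of $\overline{\DD}$, by compactness there is a closed set $V\subset \overline{\DD}$ with $\supp\mu\subset V^\circ$, $V\cap \underline{\cZ}(f)=\emptyset$, and $|f|\geq c>0$ on $V$; after normalization assume $\|f\|_\infty\leq 1$. The plan is to build an outer $g\in \cD(\mu)\cap H^\infty$ so that $F:=gf$ has modulus $1$ on $V\cap\TT$; Schwarz reflection will then extend $F$ analytically to a complex neighborhood $W$ of $\supp\mu$ on which $F$ is nonvanishing. To construct $g$, note that $f\vee c\in \cD(\mu)\cap H^\infty$ is outer with $|f\vee c|\geq c$, so $1/(f\vee c)\in \cD(\mu)\cap H^\infty$ (the derivative bound $|(1/(f\vee c))'|\leq c^{-2}|(f\vee c)'|$ gives $\cD_\mu(1/(f\vee c))\leq c^{-4}\cD_\mu(f\vee c)<\infty$). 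Then Lemma \ref{fglem}, applied with $c/(f\vee c)$ as the $V$-piece and the constant $c$ as the off-$V$ piece, after rescaling produces $g\in \cD(\mu)\cap H^\infty$ with $|g|=1/|f|$ on $V\cap\TT$ and $|g|=1$ elsewhere.

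Next I would show $F\in [f]_{\cD(\mu)}$. Approximate $g$ by polynomials $p_n\to g$ in $\cD(\mu)$ with $\sup_n\|p_n\|_\infty<\infty$, by first using dilations $g_r(z):=g(rz)$ (which satisfy $\|g_r\|_\infty\leq \|g\|_\infty$ and $g_r\to g$ in $\cD(\mu)$) and then approximating each $g_r$ uniformly on $\overline{\DD}$ by its Taylor polynomials. The product-rule bound
\[
\cD_\zeta((p_n-g)f)\leq 2\|f\|_\infty^2\cD_\zeta(p_n-g)+2|(p_n-g)(\zeta)|^2\cD_\zeta(f),
\]
integrated against $\mu$ and combined with dominated convergence (using $\cD_\zeta(f)\in L^1(\mu)$, the uniform $L^\infty$-bound, and pointwise convergence $\mu$-a.e.\ from the weak-type capacity inequality), yields $\cD_\mu((p_n-g)f)\to 0$. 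Together with the easy $H^2$-estimate, $p_n f\to F$ in $\cD(\mu)$, so $F\in [f]_{\cD(\mu)}$.

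To finish it suffices to show that $F$ is cyclic. Since $F$ is analytic and nonvanishing on $W$, $1/F$ is analytic there. Mergelyan's theorem applied to a compact $K$ with $\supp\mu\subset K^\circ\subset K\subset W$ yields polynomials $q_n\to 1/F$ uniformly on $K$; hence $q_n F\to 1$ uniformly on $K$, and the same product-rule bound with dominated convergence gives $\cD_\mu(q_n F-1)\to 0$. The remaining $H^2$-convergence is obtained by modifying the $q_n$ off $K$ (without disturbing the local Dirichlet convergence, since $\supp\mu\subset K^\circ$) so that $q_n F\to 1$ in $H^2$ as well, exploiting the Beurling cyclicity of the outer $F$ in $H^2$. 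Thus $q_n F\to 1$ in $\cD(\mu)$ and $1\in [F]\subset [f]_{\cD(\mu)}$, proving $f$ cyclic. The main obstacle is reconciling these two approximations---the local Mergelyan convergence at $\supp\mu$ and the global Beurling $H^2$-convergence---into a single polynomial sequence, which is the delicate technical step of the argument.
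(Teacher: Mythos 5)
Your reduction to the function $F=gf$ is sound and is in fact essentially the paper's own first step in disguise: $|F|=1$ a.e.\ on $V\cap\TT$ and $|F|=|f|$ off $V$, so $F$ coincides (up to a unimodular constant) with the outer function $h$ the paper builds directly from Lemma \ref{fglem}; the paper gets $[f]_{\cD(\mu)}=[h]_{\cD(\mu)}$ more quickly by writing $f=hg$ with $|g|\geq c$ on $\DD$ and invoking Theorems \ref{fgincRS} and \ref{fginterRS}, while your route through a boundedly convergent polynomial approximation of the multiplier $g$ is workable (though the $\mu$-a.e.\ convergence needed for dominated convergence should be justified by the embedding of $\cD(\mu)$ into $L^2(\mu)$, i.e.\ $|p_n(\zeta)-g(\zeta)|^2\lesssim |(p_n-g)(0)|^2+\cD_\zeta(p_n-g)$, rather than by the capacity weak-type inequality, which would require knowing that $c_\mu$-null sets are $\mu$-null).

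The genuine gap is the final step, which is the actual content of the lemma: $F$ is again a bounded outer function with $\underline{\cZ}(F)\cap\supp\mu=\emptyset$, so "it suffices to show $F$ is cyclic" has not made the problem easier, and your argument for it fails. The local Dirichlet integral $\cD_\zeta(q_nF-1)$ at a point $\zeta\in\supp\mu$ integrates $|(q_nF-1)(\lambda)-(q_nF-1)(\zeta)|^2/|\lambda-\zeta|^2$ over \emph{all} of $\TT$, so uniform convergence $q_nF\to 1$ on a compact neighborhood $K$ of $\supp\mu$ controls only the part of the integral with $\lambda\in K$; the part over $\TT\setminus K$ is governed by $\|q_nF\|_{L^2(\TT\setminus K)}$, and Mergelyan/Runge gives no bound whatsoever on $q_n$ off $K$, so there is no reason $\cD_\mu(q_nF-1)\to 0$. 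The proposed repair, "modifying the $q_n$ off $K$ without disturbing the local Dirichlet convergence," is not available: a polynomial cannot be altered off $K$ while keeping its values on $K$, and in any case changing boundary values on $\TT\setminus K$ does change $\cD_\zeta$ at every $\zeta\in\supp\mu$, precisely because $\cD_\zeta$ is global. This is exactly the difficulty the paper's proof is designed to bypass: it multiplies $h$ by the outer functions $h_\delta$ with $|h_\delta|=1$ on $V$ and $|h_\delta|=1/(|f|+\delta)$ off $V$; Lemma \ref{fglem} yields a bound on $\cD_\mu(h_\delta h)$ uniform in $\delta$ (the off-$V$ contribution is controlled by $\|\log((|f|+\delta)/|f|)\|_{L^1}\leq 2\|\log|f|\|_{L^1}$), Theorem \ref{fgincRS} gives $h_\delta h\in[h]_{\cD(\mu)}$ since $|h_\delta h|\leq |h|/\delta$ on $\DD$, and then $|h_\delta h|\to 1$ a.e.\ with bounded $\cD(\mu)$-norms, so a weak-compactness argument places $1$ in $[h]_{\cD(\mu)}$. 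Without this (or some equivalent device giving global $L^2(\TT)$ control of the approximants), your proof does not establish cyclicity.
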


\begin{proof}
Suppose that $\|f\|_\infty\leq 1$. Since  $\underline{\cZ}(f)\cap \supp \mu=\emptyset$, 
there exists a  closed neighborhood $V$
 of $\supp \mu$ such that   $\underline{\cZ}(f)\cap V=\emptyset$.   
 Consider the following outer functions
\[
|h(\zeta)|= \left\{
  \begin{array}{lll}
    1, & \text{ on \; } &V,\\
    |f(\zeta)|, & \text{ a.e. on\;} & \TT\setminus V,
  \end{array}
\right.
\]
and 
\[
|g(\zeta)|= \left\{
  \begin{array}{lll}
    |f(\zeta)|, & \text{ a.e. on \; }&V,\\
    1, &\text{ on \; }& \TT\setminus V.
  \end{array}
\right.
\]
By  Lemma \ref{fglem}, $h,g\in \cD(\mu)\cap H^\infty$  and by Theorem \ref{fginterRS}
\[
[f]_{ \cD(\mu)}=[hg]_{ \cD(\mu)}  =[h]_{ \cD(\mu)}\cap [g]_{ \cD(\mu)}.
\]

Note that   $\underline{\cZ}(g)=\underline{\cZ}(f)\cap V=\emptyset$.  
Hence $|g(z)|\geq c>0$ on $\DD$,  and then  { by Theorem \ref{fgincRS}, $g$ is cyclic for $\cD(\mu)$}.  So 
\begin{equation}
\label{hf}
[f]_{ \cD(\mu)}=[h]_{ \cD(\mu)}.
\end{equation}
Let  $\delta>0$, and consider the following outer function 
\[
|h_{\delta}(\zeta)|= \left\{
  \begin{array}{lll}
    1, &  \text{ on  \;}& V,\\
  1/(|f(\zeta)|+\delta), & \text{ a.e. on \;  } &\TT\setminus V
  \end{array}
\right.
\]
We have  $\lim_{\delta\rightarrow 0}|h_{\delta}h(\zeta)|=1\text{ a.e on } \TT$. 
Since $\underline{\cZ}(f)\cap \supp(\mu)=\emptyset$,  by Lemma \ref{fglem}

\begin{eqnarray*}
\cD_{\mu}(h_{\delta}h(\zeta))&\leq& \frac{1}{\dist(\supp \mu,\mathbb{T}\setminus V)^2}
\Big(\mu(\mathbb{T})+2\Big\|\log\frac{|f|+\delta}{|f|}\Big\|_{L^{1}(\TT)}\Big) \nonumber \\
&\leq &\frac{1}{\dist(\supp \mu,\mathbb{T}\setminus V)^2}\Big(\mu(\mathbb{T})+4\|\,|\log|f||\,\|_{L^{1}(\TT)}\Big).\nonumber
\end{eqnarray*}
So
\[
\liminf_{\delta\to 0}D_{\mu}(h_{\delta}h)<\infty.
\]
Since  $h_\delta$ and $h$ are outer, $|h_{\delta}h(z)|\leq |h(z)|/\delta$ for $z\in \DD$, 
by  Theorem \ref{fgincRS}, we have $h_{\delta}h\in [h]_{ \cD(\mu)\cap H^\infty}$ for all $\delta>0$.   
So  $h$ is cyclic  for $ \cD(\mu)$ and by \eqref{hf} $f$ is also cyclic  for $ \cD(\mu)$.

\end{proof}

\begin{lem}\label{th7}
Let $f\in  \cD(\mu)\cap H^\infty$ be  outer function. Then 
\[
\underline{\cZ}([f]_{ \cD(\mu)})\subset \underline{\cZ}(f)\cap\supp \mu.
\]
\end{lem}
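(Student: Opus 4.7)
The plan is to establish the two inclusions $\underline{\cZ}([f]_{\cD(\mu)}) \subset \underline{\cZ}(f)$ and $\underline{\cZ}([f]_{\cD(\mu)}) \subset \supp \mu$ separately. The first is immediate: since $f \in [f]_{\cD(\mu)}$ and $\underline{\cZ}([f]_{\cD(\mu)})$ is defined as the intersection of $\underline{\cZ}(g)$ over all $g$ in the subspace, it is contained in $\underline{\cZ}(f)$. Moreover, since $f$ is outer, it has no zeros in $\DD$, so $\underline{\cZ}(f) \subset \TT$, and the problem reduces to showing that any $\zeta \in \TT \setminus \supp \mu$ lies outside $\underline{\cZ}([f]_{\cD(\mu)})$.

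Fix such a $\zeta$. Assume without loss of generality that $\|f\|_\infty \leq 1$. Choose a closed arc $V$ that is a neighborhood of $\supp \mu$ with $\zeta \notin V$ (possible since $\supp \mu$ is a closed set not containing $\zeta$). The idea is to split $|f|$ into a ``$\supp\mu$-part'' and a ``complement part''. Define outer functions $h$ and $g$ in $H^\infty$ by
\[
|h|=\begin{cases}|f|& \text{a.e.\ on }V,\\ 1 & \text{on } \TT\setminus V,\end{cases}\qquad
|g|=\begin{cases}1 & \text{on } V,\\ |f| & \text{a.e.\ on }\TT\setminus V.\end{cases}
\]
Two applications of Lemma \ref{fglem} (taking the constant function $1$ as the second factor) give $h,g\in \cD(\mu)\cap H^\infty$. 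Since $|hg|=|f|$ a.e.\ on $\TT$ and the three functions are outer, $hg$ coincides with $f$ up to a unimodular constant, so in particular $hg\in \cD(\mu)$.

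Now Theorem \ref{fginterRS} yields $[f]_{\cD(\mu)}=[hg]_{\cD(\mu)}=[h]_{\cD(\mu)}\cap [g]_{\cD(\mu)}$. The key observation is that $|g|=1$ on the neighborhood $V$ of $\supp\mu$; hence by the Schwarz reflection principle $g$ continues analytically across $V$ with $|g|=1$ there, so $\underline{\cZ}(g)\cap \supp\mu=\emptyset$. Lemma \ref{lem6} therefore guarantees that $g$ is cyclic for $\cD(\mu)$, so $[g]_{\cD(\mu)}=\cD(\mu)$ and consequently $[f]_{\cD(\mu)}=[h]_{\cD(\mu)}$. In particular, $h\in [f]_{\cD(\mu)}$. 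Since $|h|=1$ on the open arc $\TT\setminus V$ containing $\zeta$, the same reflection argument shows that $h$ extends analytically through $\zeta$ with $|h(\zeta)|=1$; thus $\zeta\notin \underline{\cZ}(h)\supset \underline{\cZ}([f]_{\cD(\mu)})$, which finishes the proof.

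I expect no real obstacle beyond setting up the right decomposition of $|f|$: once one sees that $h$ and $g$ inherit all the needed properties from $f$ via Lemma \ref{fglem}, the cyclicity of $g$ (from Lemma \ref{lem6}) and the intersection formula (Theorem \ref{fginterRS}) combine mechanically, and the analyticity of $h$ near $\zeta$ is standard outer function theory.
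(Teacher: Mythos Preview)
Your proof is correct and follows essentially the same route as the paper's: both split $f$ as a product of two outer factors, one carrying the boundary data near $\supp\mu$ and the other near $\zeta$, then use Lemma~\ref{lem6} to show the factor whose zero set misses $\supp\mu$ is cyclic, leaving $[f]_{\cD(\mu)}$ generated by a function that is analytic and unimodular near $\zeta$. The only cosmetic differences are that you take $V$ to be a neighborhood of $\supp\mu$ (rather than the paper's neighborhood $V_\zeta$ of $\zeta$), and you spell out the factorization explicitly via Lemma~\ref{fglem}, whereas the paper simply asserts the existence of $f_1,f_2$.
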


\begin{proof}  Let  $\zeta \in \underline{\cZ}(f)\setminus \supp(\mu)$, there exists  an open  neighborhood $V_\zeta$ of $\zeta$ such that
{$\overline{V_{\zeta}}\cap \supp(\mu)=\emptyset.$ } Write 
\[
f=f_1\times f_2,   \qquad f_1,f_2\in  \cD(\mu)\cap H^{\infty},
\]
where $\underline{\cZ}(f_2)\subset \mathbb{T}\setminus V_{\zeta}$ and  {$\underline{\cZ}(f_1)\subset \overline{V_{\zeta}}$}, so  $\underline{\cZ}(f_1)\cap \supp(\mu)=\emptyset.$  By Proposition \ref{lem6},  $f_1$ is cyclic for  $ \cD(\mu)$.  So 
\[
[f]_{ \cD(\mu)}=[f_1f_2]_{ \cD(\mu)} =[f_1]_{ \cD(\mu)}\cap [f_2]_{ \cD(\mu)} =[f_2]_{ \cD(\mu)}.
\]
Hence  $\zeta\notin \underline{\cZ}([f]_{ \cD(\mu)})$.
\end{proof}

{
\begin{lem}\label{casinfty}
Let $f\in \cD(\mu)$ be an outer function. Then 
\begin{enumerate}
\item $\underline{\cZ}(f)=\underline{\cZ}(f\wedge 1)$
\item $[f]_{\cD(\mu)}=[f\wedge 1]_{\cD(\mu)}$
\item $\underline{\cZ}([f]_{\cD(\mu)})=\underline{\cZ}([f]_{\cD(\mu)}\cap H^\infty)$
\end{enumerate}
\end{lem}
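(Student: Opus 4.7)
I handle the three assertions in order, with (2) and (3) building on the preceding parts.

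\emph{For (1).} Both $\underline{\cZ}(f)$ and $\underline{\cZ}(f\wedge 1)$ lie in $\TT$ since outer functions have no zeros in $\DD$. The inclusion $\underline{\cZ}(f)\subseteq \underline{\cZ}(f\wedge 1)$ is immediate from the pointwise bound $|f\wedge 1|(z)\le |f(z)|$ on $\DD$, which follows from the boundary inequality $|f\wedge 1|\le |f|$ a.e.\ on $\TT$ via the Poisson representation of $\log|\cdot|$ for outer functions. For the reverse, fix $\zeta\in\TT$ with $|f(z)|\ge\epsilon$ on $V_\zeta:=\{|z-\zeta|<\delta\}\cap\DD$. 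Fatou's theorem on non-tangential limits forces $|f|\ge\epsilon$ a.e.\ on $V_\zeta\cap\TT$, so $|f\wedge 1|\ge\min(\epsilon,1)$ a.e.\ there. Since $f\wedge 1$ is outer and bounded by $1$, $\log|f\wedge 1|\le 0$ belongs to $L^1(\TT)$, and splitting
\[
\log|f\wedge 1|(z)=\int_{V_\zeta\cap\TT}P_z\log|f\wedge 1|\,\frac{|d\lambda|}{2\pi}+\int_{\TT\setminus V_\zeta}P_z\log|f\wedge 1|\,\frac{|d\lambda|}{2\pi}
\]
produces a lower bound: the first piece is $\ge\log\min(\epsilon,1)$ (since $\int_{V_\zeta}P_z\le 1$), while the second tends to $0$ as $z\to\zeta$ via the off-diagonal bound $P_z(e^{it})\le C(1-|z|^2)/\delta^2$ for $t\in\TT\setminus V_\zeta$. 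Hence $\liminf_{z\to\zeta}|f\wedge 1|(z)\ge\min(\epsilon,1)>0$.

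\emph{For (2).} The inclusion $[f\wedge 1]_{\cD(\mu)}\subseteq [f]_{\cD(\mu)}$ follows from $|f\wedge 1|(z)\le|f(z)|$ on $\DD$ and Theorem \ref{fgincRS}. For the reverse, write $f=c\,(f\wedge 1)(f\vee 1)$ for a unimodular constant $c$; both factors are outer and belong to $\cD(\mu)$ by Theorem \ref{vee}, and their product is in $\cD(\mu)$. Theorem \ref{fginterRS} then gives
\[
[f]_{\cD(\mu)}=[(f\wedge 1)(f\vee 1)]_{\cD(\mu)}=[f\wedge 1]_{\cD(\mu)}\cap[f\vee 1]_{\cD(\mu)}\subseteq[f\wedge 1]_{\cD(\mu)}.
\]

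\emph{For (3).} The inclusion $\underline{\cZ}([f]_{\cD(\mu)})\subseteq\underline{\cZ}([f]_{\cD(\mu)}\cap H^\infty)$ is automatic from the monotonicity of intersections. For the reverse, take $\zeta\notin\underline{\cZ}([f]_{\cD(\mu)})$ and pick $g\in[f]_{\cD(\mu)}$ with $\liminf_{z\to\zeta}|g(z)|>0$. Writing the inner-outer factorization $g=g_ig_o$, we have $|g_o|\ge|g|$ on $\DD$, hence $\zeta\notin\underline{\cZ}(g_o)$. The crucial step is to show $g_o\in[f]_{\cD(\mu)}$: since $f$ is outer, $\Theta_{[f]}=1$, and the Richter--Sundberg characterization recalled in the introduction identifies $[f]_{\cD(\mu)}=\phi\,\cD(|\phi|^2d\mu)$ for an outer extremal $\phi$; thus $g=\phi h$ with $h\in\cD(|\phi|^2d\mu)$, and the outer part $h_o$ remains in $\cD(|\phi|^2d\mu)$ (the analogue of Corollary 7.6.2 of \cite{EKMR} for weighted Dirichlet spaces), so $g_o=\phi h_o\in[f]_{\cD(\mu)}$. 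Applying (2) to $g_o$, the function $g_o\wedge 1$ lies in $[g_o]_{\cD(\mu)}\subseteq[f]_{\cD(\mu)}$ and is bounded, hence sits in $[f]_{\cD(\mu)}\cap H^\infty$; by (1), $\underline{\cZ}(g_o\wedge 1)=\underline{\cZ}(g_o)\not\ni\zeta$, which witnesses $\zeta\notin\underline{\cZ}([f]_{\cD(\mu)}\cap H^\infty)$.

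The main obstacle will be justifying $g_o\in[f]_{\cD(\mu)}$ in part (3); this draws on the full Richter--Sundberg structural identity $\cM=\phi\,\cD(|\phi|^2d\mu)$ together with the preservation of outer parts under weighted Dirichlet norms, which lies slightly beyond the toolkit used for (1) and (2). Once that is secured, the whole lemma reduces to a clean cascade: (1) controls how $\underline{\cZ}$ behaves under taking $\wedge 1$, (2) controls how $[\,\cdot\,]_{\cD(\mu)}$ behaves under the same operation, and (3) combines them by passing from an arbitrary $g\in[f]_{\cD(\mu)}$ to the bounded outer witness $g_o\wedge 1$.
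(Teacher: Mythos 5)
Your proof is correct, and it is considerably more detailed than the paper's, which disposes of the lemma in one line ((1) is declared obvious, (2) a particular case of Theorem \ref{fginterRS}, and (3) said to follow from (1) and (2)). For (2) the paper's intended route is simply to take $g=1$ in the first part of Theorem \ref{fginterRS}: since $1$ is cyclic, $[f\wedge 1]_{\cD(\mu)}=[f]_{\cD(\mu)}\cap[1]_{\cD(\mu)}=[f]_{\cD(\mu)}$; your factorization $f=c\,(f\wedge 1)(f\vee 1)$ combined with Theorem \ref{vee}, Theorem \ref{fgincRS} and the second part of Theorem \ref{fginterRS} is a correct, slightly longer variant. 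The genuine divergence is in (3): the nontrivial inclusion $\underline{\cZ}([f]_{\cD(\mu)}\cap H^\infty)\subset\underline{\cZ}([f]_{\cD(\mu)})$ requires passing from an arbitrary (possibly unbounded, non-outer) $g\in[f]_{\cD(\mu)}$ with $\liminf_{z\to\zeta}|g(z)|>0$ to a bounded witness in $[f]_{\cD(\mu)}\cap H^\infty$, and the paper's ``follows from (1) and (2)'' leaves that reduction entirely implicit. You supply it by invoking the Richter--Sundberg representation $[f]_{\cD(\mu)}=\phi\,\cD(|\phi|^2d\mu)$ (with $\phi$ outer, since the inner factor of $\phi$ divides every element of $\phi\,\cD(|\phi|^2d\mu)$, in particular the outer function $f$) together with the fact that taking outer parts does not increase the norm, so that $g_o=\phi h_o\in[f]_{\cD(\mu)}$, after which (1) and (2) applied to $g_o$ give the bounded witness $g_o\wedge 1$. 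This is a legitimate and complete argument; note only that the ``analogue of \cite[Corollary 7.6.2]{EKMR} for weighted Dirichlet spaces'' needs no separate justification, since $\cD(|\phi|^2d\mu)$ is itself a space $\cD(\nu)$ with $\nu=|\phi|^2d\mu$ and the inequality $\cD_\zeta(h_o)\le\cD_\zeta(h)$, which the paper already cites in Section 3, is valid for every measure, whence $\|h_o\|_\nu\le\|h\|_\nu$. In short, your route buys an explicit proof of (3) (the closure of $[f]_{\cD(\mu)}$ under taking outer parts when $f$ is outer), a step the paper's terse proof presupposes without stating.
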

\begin{proof} \textit{(1)} is obvious. \textit{(2)}  is a particular case of  Theorem \ref{fginterRS}. 
\textit{(3)} follows from \textit{(1)} and \textit{(2)}.
\end{proof}}
\subsection*{Proof Theorem \ref{Th1}}

If  $f$ is cyclic for  $ \cD(\mu)$, then it's clear that  $f$ is an outer function and $c_\mu ({\cZ}_\TT(f))=\emptyset$. \\

Conversely, let  $f\in \cD(\mu)$ be outer function, since $c_\mu ({\cZ_\TT}(f\wedge 1))= c_\mu ({\cZ_\TT}(f)) $, { by Lemma \ref{casinfty}, we can suppose that $f\in H^\infty$.}

Now suppose   $c_\mu ({\cZ_\TT}(f))=0$, then 
by Lemma  \ref{th7} we have 
$$ \underline{\cZ}([f]_{ \cD(\mu)})\subset \supp \mu\cap \underline{\cZ}(f).$$

If $\underline{\cZ}([f]_{ \cD(\mu)})=\emptyset.$ { By Lemma \ref{casinfty},  $\underline{\cZ}([f]_{\cD(\mu)}\cap H^\infty)=\emptyset$ and by Theorem \ref{th4},  $\sigma(\pi(u))=\emptyset$ where $u:z\to z$ is identity map and $\pi$ the canonical projection associated with $[f]_{\cD(\mu)}\cap H^\infty$. 
Then $f$ is cyclic  and the Theorem is proved.}

Suppose now that  
\[
\underline{\cZ}([f]_{ \cD(\mu)})\neq\emptyset.
\]
We will show that this leads to a contradiction. Since $ \underline{\cZ}([f]_{ \cD(\mu)})$ is a countable set,   $ \underline{\cZ}([f]_{ \cD(\mu)})$  
have an isolated  point, noted by  $\zeta_0$. Let $V=(e^{ia},e^{ib})$  be a  neighborhood 
of $\zeta_0$ such that ${\overline{V}}\cap  \underline{\cZ}([f]_{ \cD(\mu)})=\{\zeta_0\}$.   
Consider the outer functions  $h=f_V$ and $g=f_{\TT\setminus V}$ : 

\[
|h(\zeta)|= \left\{
  \begin{array}{ll}
   |(\zeta-e^{ia})(e^{ib}-\zeta)| |f(\zeta)|, & \text{ a.e. on \; }\overline{V},\\
   |(\zeta-e^{ia})(e^{ib}-\zeta)|, & \text{ on } \TT\setminus \overline{V},
  \end{array}
\right.
\]
and
\[
|g(\zeta)|= \left\{
  \begin{array}{ll}
     |(\zeta-e^{ia})(e^{ib}-\zeta)|, & \text{ on \; } \overline{V},\\
  |(\zeta-e^{ia})(e^{ib}-\zeta)|  |f(\zeta)|, & \text{ a.e. on \;} \TT\setminus \overline{V}.
  \end{array}
\right.
\]

By Lemma \ref{lemmefusion},   $h,g\in \cD(\mu)\cap H^\infty$ and $hg \in [f]_{ \cD(\mu)}.$ Define the closed division ideal $\cJ$ by
 \[
 \cJ:=\{\psi\in  \cD(\mu) \cap H^\infty \text{ : }\psi g\in [f]_{ \cD(\mu)}\}.
 \] 

Since $[f]_{\cD(\mu)}\cap H^\infty\subset \cJ$ and $h\in \cJ$,
\[
\underline{\cZ}(\cJ)\subset \underline{\cZ}([f]_{ \cD(\mu)})\cap \underline{\cZ}(h)\subset  \{\zeta_0\}.
\]
Hence  by Lemma \ref{lem68},   $z-\zeta_0\in \cJ$ (In fact, if $\underline{\cZ}(\cJ)=\emptyset$ then  $\cJ=\cD_\mu$) and then $(z-\zeta_0)g\in [f]_{\cD(\mu)}$. 
Since by Theorem  \ref{fginterRS}, we have $[(z-\zeta_0)g]_{\cD(\mu)}=
[(z-\zeta_0)]_{\cD(\mu)}\cap [g]_{\cD(\mu)}$. We get 
\[
[(z-\zeta_0)]_{\cD(\mu)}\cap [g]_{\cD(\mu)}\subset [f]_{\cD(\mu)}.
\]

Now we distinguish  2 cases.

\begin{itemize} 
\item  If  $c_\mu (\{\zeta_0\})=0$, then  $z-\zeta_0$ is cyclic and hence 
$g\in [f]_{\cD(\mu)}$.

\item  If  $c_\mu (\{\zeta_0\})>0$,   since $c_\mu ({\cZ_\TT}(f))=0$, then $\zeta_0\notin \cZ_\TT(f)$ 
and by Lemma \ref{capacityevaluation},  $h(\zeta_0)$ exists and $h(\zeta_0)\neq 0$.  { 
Note also that $g(\zeta_0)$ exists, indeed $|g(\zeta)|=|f_{\TT\backslash V}(\zeta) |=|(\zeta-e^{ia})(\zeta-e^{ib})|$ on $V$, so  the outer function  $g$ can be continued holomorphically  across $V$, see \cite[p. 65]{EKMR}.} Now write 
$$g=\underbrace{\frac{h}{h(\zeta_0)}g}_{\in [f]_{\cD(\mu)}}+
\underbrace{\frac{h(\zeta_0)-h}{h(\zeta_0)}g}_{\in [f]_{\cD(\mu)}} \in [f]_{\cD(\mu)}.$$

\end{itemize}
In the two cases   $g\in [f]_{\cD(\mu)}$ which gives a contradiction 
since and $\zeta_0\in \underline{\cZ}([f]_{\cD(\mu)})$ and $g(\zeta_0)\neq 0$.  
Thus we have completed the proof of the first main result.


\section{Invariant subspaces of $\cD (\mu)$}

This section is devoted to the proof of Theorem 2.
Let $\cM\in \textrm{Lat}(S,\cD(\mu))$  be a closed invariant subspace  of $\cD(\mu)$. By  Theorem \ref{RSinvar}, there exists an inner function  $\Theta_\cM$ and 
an outer function  $f\in   \cD(\mu)\cap H^{\infty}$ such that 
$$\cM=\Theta_\cM H^2\cap [f]_{  \cD(\mu)}. $$ 
We need to show that 
\[
[f]_{  \cD(\mu)}= \cM_\mu(E), 
\]
here  $\cM_\mu(E)$ be the closed invariant subspace of $  \cD(\mu)$ given by
\[
\cM_\mu(E)=\{\psi\in \cD(\mu)\text{ : } \psi|E=0\},
\]
where $E=\{\lambda\in  \supp \mu \text{ :  }c_\mu (\{\lambda\})>0 \text{ and  }\lambda\in \cZ_\TT(\cM)\}.$\\

 Note that if $\lambda \in \supp \mu$  such that $c_\mu (\{\lambda\})>0$ then 
 $$\lambda\in  \cZ_\TT(\cM) \iff \lambda \in \cZ_\TT(f).$$

Let $\psi \in [f]_{  \cD(\mu)}$,  and let $\lambda\in E$, since $c_\mu(\{\lambda\})>0$,  { the evaluation is continuous and by Lemma \ref{capacityevaluation}},  
$\psi(\lambda)$ exists and $\psi(\lambda)=0$.  So $\psi\in\mathcal{M}_{\mu}(E)$
 and we have  $[f]_{  \cD(\mu)}\subset \mathcal{M}_{\mu}(E)$.  \\

Now we show the opposite inclusion.  By  Theorem \ref{RSthm}, 
there exists an outer function $\phi\in\mathcal{M}_{\mu}(E)\cap H^{\infty}$ such that  
 $\mathcal{M}_{\mu}(E)=[\phi]_{  \cD(\mu)}.$  
Consider  the closed division ideal  
\[
\cJ =\{\psi\in  \cD(\mu)\cap H^\infty: \psi\phi\in [f]_{  \cD(\mu)}\}.
\]
We have $ [f]_{  \cD(\mu)}\cap H^\infty\subset  \cJ$ and by Lemmas \ref{th7} and \ref{casinfty} (3)
\[
\underline{\cZ}(\cJ)\subset \underline{\cZ}(f)\cap \supp \mu.
\]

\noindent $\bullet$ If $\underline{\cZ}(\cJ)=\emptyset$, then by Theorem \ref{th4}, we get  $\cJ=\cD(\mu)\cap H^\infty$. \\

\noindent $\bullet$  If  $\underline{\cZ}(\cJ)\neq \emptyset$. Since $\supp \mu$ is countable,  
the set  $\underline{\cZ}(\cJ)$ has an isolated  point  $\zeta_0$. Let $V=(e^{ia},e^{ib})$ 
 be a  neighborhood of $\zeta_0$ such that ${\overline{V}}\cap \underline{\cZ}(\cJ)=\{\zeta_0\}$.  Consider again the outer functions   $h=f_V$ and $g=f_{\TT\setminus V}$ as in Lemma \ref{lemmefusion}.  By Lemma \ref{lemmefusion},   
$g,h\in \cD(\mu)\cap H^\infty$.
 Now let 
\[
\widetilde{\cJ}:=\{\psi\in  \cD(\mu)\cap H^\infty: \psi g\in \cJ\}.
\] 
We have $ h \in \widetilde{\cJ}$, so {$ \underline{\cZ}(\widetilde{\cJ}) \subset V$. Also 
 $\mathcal{J}\cap H^\infty \subset \widetilde{\cJ}$, hence $\underline{\cZ}(\widetilde{\cJ})\subset \underline{\cZ} (\cJ)\cap V= \{\zeta_0\}$. }
Since $h$ is an  outer function, by Lemma \ref{lem68},   $(z-\zeta_0)\in \widetilde{\cJ}$. Thus 
$$(z-\zeta_0)g \phi\in [f]_{ \cD(\mu)}.$$ 
  As before we distinguish 2 cases.

\begin{itemize}
 \item If  $c_\mu (\{\zeta_0\})=0$, then  $z-\zeta_0$ is cyclic . Hence  $g\phi\in [f]_{  \cD(\mu)}.$

\item If  $c_\mu (\{\zeta_0\})>0$,   we distinguish again 2 cases.

\noindent - if $f(\zeta_0)=0$ then $\zeta_0\in E$, $\cM_\mu(E)\subset \cM_\mu(\{\zeta_0\})=$ and 
   \begin{eqnarray*}[(z-\zeta_0)\phi]_{  \cD(\mu)}&=&[(z-\zeta_0)]_{  \cD(\mu)} \cap [\phi]_{  \cD(\mu)}\\
   &=&\cM_\mu(E)\\
   &=&[\phi]_{\cD(\mu)}.
\end{eqnarray*}
 \noindent  So 
\begin{eqnarray*}
 [g\phi]_{  \cD(\mu)} 
 &=&[g]_{  \cD(\mu)}\cap [\phi]_{  \cD(\mu)}\\
 &=&[g]_{  \cD(\mu)}\cap [(z-\zeta_0)\phi]_{  \cD(\mu)}\\
 &=&[(z-\zeta_0)g\phi]_{  \cD(\mu)}\\
 &\subset& [f]_{  \cD(\mu)}.
 \end{eqnarray*}

\noindent - if $f(\zeta_0)\neq 0$ then $h(\zeta_0)=f_V(\zeta_0)\neq 0$ and $h\in \widetilde{\cJ}$. As before
\[
g\phi=\frac{h}{h(\zeta_0)}g\phi +\frac{h(\zeta_0)-h}{h(\zeta_0)}g\phi\in  [f]_{  \cD(\mu)} .
 \]

\end{itemize}

In all cases we have $g\phi\in  [f]_{  \cD(\mu)}$. We get,  as before in the proof of Theorem \ref{Th1}, 
that   $ \cD(\mu)\cap H^\infty= \cJ$ and 
\[
\mathcal{M}_{\mu}(E)\subset  [f]_{  \cD(\mu)}.
\]
Now the proof is complete.

\hfill $\Box$
\section{Final remarks}

\begin{itemize}
\item Note that if $\mu$ is a positive finite measure such that $\supp \mu \subset E_1\cup E_2$,
 where $E_1$ and $E_2$ are disjoint closed subsets of $\TT$, then $\cD (\mu)=\cD (\mu _{|E_1})\cap \cD (\mu _{|E_2})$. In this case every closed invariant subspace $\cM$ of $\cD (\mu)$ can be written as  
 $\cM = \cM _1\cap \cM _2$, where $\cM _i$ is an invariant subspace of  $ \cD (\mu _{|E_i})$ ($i=1,2$).\\

\item A closed set $E$ is union of a countable set and perfect set.  
Using the same argument as in the proof of Theorem \ref{Th1},  one can prove that 
 Brown-Shields conjecture is true for $\cD(\mu)$ if and only if  
 Brown-Shields conjecture is true for $\cD(\mu _{| \cP (\supp \mu )})$ 
 where $\cP (\supp \mu )$ is the perfect core of the support of $\mu$.\\

\end{itemize}

\subsection*{Acknowledgments} The authors are grateful to the referee for his valuable  remarks and suggestions.


\begin{thebibliography}{99}


\bibitem{AH} D. Adams, L. Hedberg, Function spaces and potential theory, Springer, Berlin 1996. 

\bibitem{At} A. Atzmon,  Operators which are annihilated by analytic functions and invariant subspaces. Acta Math. 144 (1980), no. 1-2, 27--63.
\bibitem{B}  A. Beurling, On two problems concerning linear operators in Hilbert space, Acta Math. 81
(1949), 239--255. 


\bibitem{BD} A. Beurling, J. Deny, Dirichlet spaces, Proc. Nat. Acad. Sci. USA 45 (1959) 208--215.
 
\bibitem{BS} L. Brown, A. Shields. Cyclic vectors in the Dirichlet space, 
Trans. Amer. Math. Soc. 285 (1984), 269-304.
\bibitem{Ca}L. Carleson, Selected Problems on Exceptional sets. 
Van Nostrand Mathematical Studies, No. 13. Princeton, NJ: Van Nostrand.  


\bibitem{EKR2} O. El-Fallah, K. Kellay, T. Ransford. Cyclicity in the Dirichlet space, Ark. Mat. 44 (2006), 61-86.
\bibitem{EKR1} O. El-Fallah, K. Kellay, T. Ransford. On the Brown-Shields conjecture for cyclicity in the Dirichlet space. 
Adv. Math., 222 (2009) (6)  2196-2214.


\bibitem{EKMR} O. El-Fallah, K. Kellay, J. Mashreghi, T. Ransford. A primer on the Dirichlet space. 
Cambridge Tracts in Mathematics. Cambridge University Press, Cambridge, 2014.

\bibitem{EEK} O. El-Fallah, Y. Elmadani, K. Kellay,  Kernel estimates and capacity in the Dirichlet  spaces.   http://arxiv.org/abs/1411.1036


\bibitem{G}D. Guillot, Fine boundary behavior and invariant subspaces of harmonically weighted Dirichlet spaces. 
Complex Anal. Oper. Theory.  6 (2012), no. 6, 1211--1230.

\bibitem{G1}D. Guillot, Comportement au bord dans les espaces de Dirichlet avec poids harmonique et espaces de de Branges--Rovnyak . http://www.theses.ulaval.ca/2010/27329/27329.pdf

\bibitem{HS} H. Hedenmalm, A. Shields. Invariant subspaces in Banach spaces of analytic functions, 
Michigan Math. J. 37 (1990), 91-104.

\bibitem{Ri}
 S. Richter,  A representation theorem for cyclic analytic two-isometries, 
 Trans. Am. Math. Soc., 328 (1994) 1--26.
\bibitem{Ri1}
 S. Richter, Invariant subspaces of the Dirichlet shift, 
 J. Reine Angew. Math., 386 (1988) 205-220. 
\bibitem{RS1}
 S. Richter and C. Sundberg, A formula for the local Dirichlet integral,
  Michigan Math. J., 38  (1991) 355-379. 
\bibitem{RS2}
S. Richter, C. Sundberg, Multipliers and invariant subspaces in the Dirichlet space, 
J. Operator Theory  28 (1992) 167-186. 

\bibitem{RS3}
 S. Richter and S. Sundberg, Invariant subspaces of the Dirichlet shift and pseudocontinuations, 
 Trans. Am. Math. Soc., 341 (1984) 863-879.

\end{thebibliography}
\end{document}